\documentclass[final, 11pt, nonatbib]{elsarticle}

\usepackage{amsmath, amssymb, amsthm}
\usepackage{tikz}
\usetikzlibrary{decorations.pathreplacing}
\usetikzlibrary{arrows.meta,positioning}
\usetikzlibrary{calc}
\usepackage{circuitikz}
\tikzset{
  freearrow/.style = {-{Stealth[length=3mm,width=2mm]}, blue, thick},
  clamparrow/.style = {-{Triangle[length=3mm,width=2mm]}, red, thick}
}

\tikzset{
  freearrow/.style = {-{Stealth[length=3mm,width=2mm]}, blue, thick},
  clamparrow/.style = {-{Triangle[length=3mm,width=2mm]}, red, thick}
}

\usepackage{mathrsfs}
\usepackage{bbm}
\usepackage{mathtools}
\usepackage{enumitem}
\usepackage{hyperref}
\usepackage{xcolor}
\hypersetup{
    colorlinks=true,
    linkcolor=blue,
    citecolor=blue,
    urlcolor=blue
}

\usepackage{caption}

\usepackage[normalem]{ulem}    

\usepackage{lineno}

\makeatletter
\let\c@author\relax
\makeatother
\usepackage[backend=biber, style=numeric-comp, sorting=none, doi=true, url=false, isbn=false]{biblatex}
\theoremstyle{definition}
\newtheorem{definition}{Definition}[section]

\theoremstyle{plain}
\newtheorem{theorem}[definition]{Theorem}

\newtheorem{lemma}[definition]{Lemma}
\newtheorem{example}[definition]{Example}
\newtheorem{remark}[definition]{Remark}

\numberwithin{equation}{section}

\begin{document}

\begin{frontmatter}

\title{A Conservation Law for Equilibrium Propagation and Coupled Learning}

\author[penn-math]{Joshua A.\ McGinnis\corref{cor1}}
\ead{jam887@sas.upenn.edu}
\author[penn-physics]{Adam G.\ Kline}
\author[penn-math,penn-bio]{Yoichiro Mori}

\cortext[cor1]{Corresponding author}

\affiliation[penn-math]{organization={Department of Mathematics, University of Pennsylvania}, city={Philadelphia}, state={PA}, country={USA}}
\affiliation[penn-physics]{organization={Department of Physics \& Astronomy, University of Pennsylvania}, city={Philadelphia}, state={PA}, country={USA}}
\affiliation[penn-bio]{organization={Department of Biology, University of Pennsylvania}, city={Philadelphia}, state={PA}, country={USA}}

\begin{abstract}
In this paper we show that the physical learning methods known as coupled learning (CL) and equilibrium propagation (EP) conserve a mass-like quantity in the trainable parameters in the continuous-time, small-nudging limit. We prove that this conservation holds in a broad range of physically relevant settings. We then show that the conservation law constrains the training dynamics in a way that makes convergence reliable in important settings for linear circuits. We conclude by discussing some practical implications of this conservation law. 
\end{abstract}

\begin{keyword}
Physical learning \sep Equilibrium propagation \sep Coupled learning \sep Conservation laws \sep Contrastive local learning networks
\end{keyword}

\end{frontmatter}


\section{Introduction}

Contrastive local learning techniques such as equilibrium propagation (EP)  \cite{scellier2017equilibrium} and coupled learning (CL)  \cite{stern2021learning} have recently drawn interest from numerous areas including biology, machine learning, and materials research. These methods are local since each adaptive part must update itself on the basis of signals in its immediate physical environment, and contrastive in that the relevant local signal is a difference between two system states. By imposing alternating constraints on a physical system and using CL or EP to adapt the interactions, the equilibrium or steady-state behavior is tuned to a desired point. This approach---locally tuning physical interactions in order to achieve desired equilibrium states---presents a difficult bilevel optimization problem~\cite{colson2007bilevel} which differs from common machine learning approaches and involves aspects of biological learning. The resulting loss is generally non-convex in the trainable parameters, and non-convex optimization is NP-hard in the worst case~\cite{murtyKabadi1987}; however, certain restricted topologies, such as the bipartite crossbar arrays we treat in Theorem~\ref{thm:crossbar}, admit more tractable structure.

Interest in physical learning from machine learning mainly stems from the idea that contrastive local learning networks (CLLNs) could drastically cut power and cost requirements \cite{christensen2022RoadmapNeuromorphic2022, dillavouMachineLearningProcessor2024}. The so-called inference phase, in which a given ML model is evaluated on input data, can be quite costly \cite{strubellEnergyPolicyConsiderations2019, luccioniPowerHungryProcessing2024}. By contrast, model evaluation is power-efficient in resistive circuits, since inference is achieved by allowing the circuit to relax into its physical steady-state under some constraints representing input data. In linear circuits, or circuits with linear resistors, this operating point minimizes the power dissipation. A closely related measure of dissipation is minimized in nonlinear circuits. In fact, power minimization is a common feature of physical learning approaches even beyond analog circuits \cite{vadlamaniPhysicsSuccessfullyImplements2020,stern2023learning}. Training is also much cheaper in principle, because the parameter update computations are local and carried out in-place. This should have a significant impact because moving data to and from memory is extremely costly \cite{strubellEnergyPolicyConsiderations2019}. Theoretical work has shown that CLLNs should be able to approach or even match typical performance on classic ML benchmarks like MNIST digit classification \cite{scellierUniversalApproximationTheorem2025}. Experiments have demonstrated learning in simpler tasks \cite{dillavou2022demonstration, dillavouMachineLearningProcessor2024}, but designs which succeed theoretically on ML benchmarks are typically too large or hard to train, and have yet to be built. There is therefore a need for theory to help develop powerful but experimentally tractable designs. 

CLLNs also capture several interesting aspects of learning in biology. This is in part by design, since learning in biological systems needs to be carried out using local rules: each neuron in a brain must adapt based on its environment. Such local learning should be contrasted with a standard implementation of gradient descent in a neural network model, in which each parameter's update depends on the states of all other parameters in the network. Clearly, each neuron in the brain or each cell in developing tissue cannot rely on information about every other cell in the system. While EP and CL address this locality, a given CLLN trained by either should not necessarily be considered neuromorphic by virtue of this fact, since it may not emulate any aspect of the brain, its parts, or its processes \cite{christensen2022RoadmapNeuromorphic2022, markovic2020physics}. More broadly, the CLLN approach is not generally concerned with realistically modeling any particular biological system. The generality of CLLNs allows for connections to a wide variety of learning phenomena in biology beyond brains, to proteins \cite{sternPhysicalNetworksBecome2024, sternPhysicalEffectsLearning2024} and even ecosystems \cite{dhanukaEcosystemsAdaptiveLiving2025}. For example, it has been shown \cite{guzmanMicroscopicImprintsLearned2024} that CLLNs can exhibit ``slow modes'' after training, corroborating arguments from evolutionary theory  \cite{husainPhysicalConstraintsEpistasis2020}. 

The bilevel optimization structure in CLLNs is important because it enforces low-power operation and leads to phenomena sometimes seen in biological learning, like slow modes.
However, this structure also poses the main theoretical challenge. 
The difficulty is mainly due to the fact that the steady-state is implicitly determined by the learning parameters and this dependence can be quite complicated. One must typically resort to simulating the physical system for each step of the learning algorithm, which can be inefficient, and it is typically not possible to make exact, explicit statements about the learning dynamics. In spite of this, we show that a physical symmetry of the system leads to an exact conservation condition satisfied by continuous EP and CL dynamics in the zero-nudge limit for a wide variety of circuits. This has several interesting theoretical consequences, and although this regime is difficult to achieve experimentally, it also has practical implications for CLLN design and training.

By considering linear circuits, we are able to use the conservation law to prove convergence in two classes of circuits. This includes a proof for crossbar arrays, which are the most expressive linear network for a given number of inputs and outputs (Thm~\ref{Thm:Crossbar_sufficient}). Furthermore, at the level of detail which we consider, symmetric clamping produces the same learning dynamics as zero-nudge clamping, hinting that dynamics with an approximately conserved quantity might be attainable in experiments. We also show that in batched updates, non-conserved nudging leads to drift in the overall scale of network parameters. This raises important practical considerations for implementation, since real adjustable edges have limits on their operating ranges \cite{dillavouMachineLearningProcessor2024}. If the scale of parameters is allowed to drift too far, pathologies and task failure can occur. On the other hand, there may be benefits to non-conserved nudging, such as model regularization.

\textbf{The outline of the paper is as follows}. We begin by reviewing the mathematical framework of EP  and CL in Section~\ref{Mathematical Set up}. In Section~\ref{C Laws} we prove the conservation laws in several different ways. We first give a proof that applies only to CL in Subsection~\ref{CL C Laws}. We then give a separate argument in Subsection~\ref{EP C Laws} that applies only to EP. This argument uses the fact that EP follows a gradient flow, and we explain this fact for completeness while also comparing it with the CL case. In Subsection~\ref{EP and CL C Laws} we present a single proof that works for both methods.

In Section~\ref{Cons of C} we explore the applications and implications of the conservation laws for the specific case of a linear circuit with trainable conductances or resistances. We also give physical intuition for the learning dynamics in the limit $\eta \to 0$ and note that this limit can be implemented using a local update rule without explicitly using the parameter $\eta$ in the case of linear circuits.

In Subsection~\ref{Convergece Argument} we show that the conservation law constrains the dynamics strongly enough that EP and CL must either converge to a solution or drive at least one trainable parameter to zero. We establish this result in the setting of a single input and a single output. We go on to prove exponential convergence for crossbar arrays. Finally, in \ref{sec:practical_implications}, we discuss the practical implication of conservation i.e.\@ how the choice of nudging method affects whether conductances vanish.

\section{Mathematical Set-Up}
\label{Mathematical Set up}

We begin with a general framework and introduce additional structure as necessary.
Let
\[
    G(x;k): \mathbb{R}^{N} \times \mathbb{R}^{M} \to \mathbb{R}
\]
be a smooth function, convex in $x$ for each admissible $k$. Here $x \in \mathbb{R}^N$ are the \textit{state variables} and $k \in \mathbb{R}^M$ are the \textit{trainable parameters}.

Let $v \in \mathbb{R}^I$ denote an input. Generally $I\ll N.$ We encode the input through a matrix $P \in \mathbb{R}^{N \times I}$, and we assume that in the \emph{free state}, the physical system minimizes its energy subject to enforcing this input:
\begin{equation}
    x_0
    = \arg\min_{x} G(x;k)
    \quad\text{subject to} \quad
    P^\top x = v.
    \label{eq:free-state}
\end{equation}
We assume that this constrained minimization problem has a unique solution for each admissible $k$; equivalently, $G(\cdot\,;k)$ is strictly convex on the affine subspace $\{x : P^\top x = v\}$.
During this stage, the parameters $k$ are held fixed.
The notation $x_0$ will become meaningful once we contrast it with the nudged/clamped states.

We encode the desired output through another matrix $Q \in \mathbb{R}^{N \times O}$ with $O \ll N$. We assume that the input variables and the output variables are disjoint. Given a target output $w \in \mathbb{R}^O$, we define the \emph{error}
\[
    r := Q^\top x_0 - w.
\]
The goal of physical learning is to adjust $k$ in order to decrease $r$.

Both equilibrium propagation and coupled learning modify the free state only at the output sites, using a small nudging parameter $\eta > 0$. The two methods differ in \emph{how} the nudge is applied.

\paragraph{Equilibrium Propagation (EP).}
EP introduces a small force proportional to the error:
\begin{equation}
    x_\eta 
    = \arg\min_{x} 
    \big(
        G(x;k) - \eta\, (Qr)\cdot x
    \big)
    \quad\text{subject to}\quad
    P^\top x = v.
    \label{eq:EP}
\end{equation}

\paragraph{Coupled Learning (CL).}
Coupled learning introduces the nudge as a constraint rather than as a force:
\begin{equation}
    x_\eta
    = \arg\min_{x} G(x;k)
    \quad\text{subject to}\quad
    P^\top x = v, 
    \qquad 
    Q^\top x =(1+\eta)Q^\top x_0 -\eta w .
    \label{eq:CL}
\end{equation}

In both cases, $x_\eta \to x_0$ as $\eta \to 0$.

\paragraph{Learning Dynamics.}

Let $k_i^{(n)}$ denote the $i$th parameter after the $n$th training iteration.
The physically implemented learning rule is of the form
\begin{equation}
    k_i^{(n+1)} 
    = k_i^{(n)}
      + \dfrac{\alpha}{\eta}\big(
      \partial_{k_i}G(x_\eta(k^{(n)});k^{(n)}) 
           - \partial_{k_i}G(x_0(k^{(n)});k^{(n)}) \big),
           \label{Learning_dynamics}
\end{equation}
where $\alpha>0$ is the learning rate.
\begin{remark}
    The notation $\partial_{k_i}G$ and $\partial_{k_i}\nabla_xG$ should always be interpreted as the derivative of the functions with respect to their $k_i$ argument. On the other hand, if other arguments of $G$ depend functionally on $k_i$ and we want the derivative to also act on these, we could write $\partial_{k_i}(G)$. For example
    \[
    \partial_{k_i}(G(x_0(k);k)) = \nabla_{x}G(x_0(k);k) \cdot \partial_{k_i}x_0(k) + \partial_{k_i}G(x_0(k);k).
\]
\end{remark}

We consider mathematically idealized continuous time training. Sending $\alpha \to 0$ and rescaling time gives the continuous-time limit
\begin{equation}
    \dot{k}_i
    =
    \dfrac{1}{\eta}\,\big(\partial_{k_i}
        G(x_\eta;k) - \partial_{k_i}G(x_0;k)
    \big).
    \label{eq:continuous-dynamics}
\end{equation}

In this paper, we are mainly concerned with the limit of small nudging $\eta \to 0$,
\begin{equation}
    \dot{k}_i
    =
   \partial_{k_i}
     \nabla_x G(x_0;k)\cdot \dfrac{d}{d\eta}x_\eta\bigg|_{\eta=0};
    \label{eq:leading-order-dynamics}
\end{equation}
Equation 
\eqref{eq:leading-order-dynamics} is the fundamental evolution equation for the remainder of the paper with an exception in Section \ref{sec:practical_implications}, where we also discuss what happens to conservation for non-zero $\alpha$ and $\eta$.

\section{Conservation Laws}
\label{C Laws}
We now prove the conservation law in three different ways. We prove it first for CL, then for EP, and finally, we give a proof that unifies both cases. We show these conservation laws for a general set of physically relevant circumstances characterized by the assumed identity, \eqref{eq:key-identity}, which holds for many, but not all, physical situations.

\subsection{Conservation for CL}
\label{CL C Laws}
Define the ``mass'' of the parameters to be
\begin{equation}  
    K := \frac12 \sum_{i=1}^M f_i(k_i).
    \label{eq:Conserved}
\end{equation}

\begin{remark}
    One could begin with a more general notion of a conserved quantity by taking 
    $K:\mathbb{R}^{M}\to\mathbb{R}$ arbitrary rather than the specific summation form used above.  
    In that case, the analogue of~\eqref{eq:key-identity} becomes a partial differential equation 
    that $K$ must satisfy.  At present we do not know the broadest conditions under which such a 
    conserved quantity exists; in general, the resulting PDE appears overdetermined and is unlikely 
    to admit nontrivial solutions.  
    \label{rmk:overdetermined}
\end{remark}
Differentiating in time gives
\[
    \dot{K}
    = \sum_{i=1}^M f_i'(k_i)\, \dot{k}_i
    = \sum_{i=1}^M f_i'(k_i)\,\partial_{k_i}
    \big( \nabla_x G(x_0;k)\big)\cdot \dfrac{d}{d\eta}x_\eta\bigg|_{\eta=0} .
\]
We {\bf assume} the following key identity holds:
\begin{equation}
    \sum_{i=1}^M f_i'(k_i)\, \partial_{k_i}\nabla_x G(x_0;k)
    = \nabla_x G(x_0;k).
    \label{eq:key-identity}
\end{equation}
Below, we characterize when the identity holds. Assuming \eqref{eq:key-identity}, we obtain
\[
    \dot{K}
    = \partial_\eta\big( G(x_\eta;k)\big)\bigg|_{\eta=0}.
\]
For CL, the nudged state $x_\eta$ satisfies an \emph{additional} constraint compared to $x_0$, and therefore
\[
    G(x_\eta;k) \ge G(x_0;k).
\]
We note that the inequality holds for positive and negative $\eta$. Thus,
\[
    \partial_\eta \big(G(x_\eta;k)\big)\bigg|_{\eta=0} = 0,
\]
from which we conclude the conservation law
\[
    \dot{K} = 0.
\]
We now give a lemma that characterizes when \eqref{eq:key-identity} holds. The lemma says that an energy with a certain separable structure gives rise to a conserved quantity.

\begin{lemma}
Suppose the energy separates as
\begin{equation}
    G(x;k) = \sum_{i=1}^{M} g_i(k_i)\,\widetilde{G}_i(x),
    \label{eq:Kirchoff_energy}
\end{equation}
where each $g_i$ is differentiable.  Assume further that the anti-derivative of 
$g_i(k)/g_i'(k)$ exists and denote it by $f_i(k)$.  Then
\[
    K(k) = \sum_{i=1}^{M} f_i(k_i)
\]
is a conserved quantity under the learning dynamics.
\label{Lemma:energy_conservation}
\end{lemma}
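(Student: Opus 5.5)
The plan is to verify the key identity \eqref{eq:key-identity} for the separable energy \eqref{eq:Kirchoff_energy}, with $f_i$ taken as an antiderivative of $g_i/g_i'$. Once the identity holds, the argument given just before the lemma applies directly, and so does the factor $\tfrac12$ convention in \eqref{eq:Conserved}, since scaling $K$ by a constant does not affect conservation. For CL it yields $\dot K = \partial_\eta G(x_\eta;k)|_{\eta=0} = 0$. For EP I would still need a separate argument that $\partial_\eta G(x_\eta;k)|_{\eta=0}=0$; see the last paragraph.

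The first step is a direct computation. Because each $\widetilde G_i$ does not depend on $k$,
\[
\partial_{k_i}\nabla_x G(x;k) = g_i'(k_i)\,\nabla_x \widetilde G_i(x).
\]
Multiplying by $f_i'(k_i) = g_i(k_i)/g_i'(k_i)$ and summing over $i$ gives
\[
\sum_i f_i'(k_i)\, g_i'(k_i)\,\nabla_x\widetilde G_i(x) = \sum_i g_i(k_i)\nabla_x \widetilde G_i(x) = \nabla_x G(x;k).
\]
This holds at every $x$, and in particular at $x_0$, which is exactly \eqref{eq:key-identity}.

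The second step is to feed this identity into the leading-order dynamics \eqref{eq:leading-order-dynamics}. This gives
\[
\dot K = \nabla_x G(x_0;k)\cdot \tfrac{d}{d\eta}x_\eta|_{\eta=0} = \partial_\eta\big(G(x_\eta;k)\big)|_{\eta=0}.
\]

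The third step, which I expect to be the main obstacle for EP, is showing that this derivative vanishes. For CL it follows from the one-sided minimality argument already given. For EP the nudge is a force, not a constraint, so I would use the Lagrange condition at the free state instead. There, $\nabla_x G(x_0;k) = P\lambda$ for some multiplier $\lambda$. Every $x_\eta$ satisfies $P^\top x_\eta = v$, so $P^\top \tfrac{d}{d\eta}x_\eta = 0$, and therefore $\nabla_xG(x_0;k)\cdot \tfrac{d}{d\eta}x_\eta|_{\eta=0} = \lambda\cdot P^\top \tfrac{d}{d\eta} x_\eta|_{\eta=0} = 0$. The same argument also covers CL, so the two cases are handled uniformly.

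Some technical points need care. First, $x_\eta$ must be differentiable in $\eta$ at $0$. I would obtain this from the implicit function theorem applied to the KKT system, using strict convexity on the constraint affine subspace together with nondegeneracy of the constraints. Second, $g_i'(k_i)\neq 0$ along the trajectory, so that $f_i$ is well defined; the hypothesis that the antiderivative exists covers this.
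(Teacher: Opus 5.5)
Your proposal is correct and follows the paper's route. The lemma's proof there is exactly your Step 1 (checking the key identity summand by summand), and your Step 3 is the paper's unified KKT proof for EP and CL, which uses that $\nabla_x G(x_0;k)$ lies in the range of $P$ while $P^\top y = 0$. As an aside, EP is not actually an obstacle for the minimality argument either: $x_\eta$ still satisfies $P^\top x_\eta = v$, so $G(x_\eta;k)\ge G(x_0;k)$ for all $\eta$ there too.
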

\begin{proof}
The proof follows by checking the identity~\eqref{eq:key-identity}. From the form of $G$, we have $\partial_{k_i} \nabla_x G(x_0; k) = g_i'(k_i)\,\nabla_x \widetilde{G}_i(x_0)$. Since $f'_i(k_i) = g_i(k_i)/g_i'(k_i)$, this gives, for each $i$,
\[
    f'_i(k_i)\,\partial_{k_i} \nabla_x G(x_0; k) = g_i(k_i)\,\nabla_x \widetilde{G}_i(x_0),
\]
which is the $i$-th summand of $\nabla_x G(x_0;k) = \sum_i g_i(k_i)\,\nabla_x \widetilde{G}_i(x_0)$. Summing over $i$ yields~\eqref{eq:key-identity}.
\end{proof}

Lemma~\ref{Lemma:energy_conservation} immediately yields the following two theorems, which cover two fundamental cases in physical learning for linear circuits.

\begin{theorem}[Conductance ``mass'' is conserved]
    Consider training the conductances in a linear resistive circuit.  
    In this case the energy takes the form
    \[
        G(x;\kappa) = \sum_{i=1}^{M} \kappa_i\,\widetilde{G}_i(x),
    \]
    where $\kappa_i$ represents the $i$th conductance.
    Then the conserved quantity is
    \[
        K(\kappa) = \frac{1}{2}\sum_{i=1}^{M} \kappa_i^2.
    \]
    \label{Thm:Cond}
\end{theorem}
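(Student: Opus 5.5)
The plan is to obtain Theorem~\ref{Thm:Cond} as a direct specialization of Lemma~\ref{Lemma:energy_conservation}. The energy $G(x;\kappa)=\sum_i \kappa_i \widetilde{G}_i(x)$ already has the separable form \eqref{eq:Kirchoff_energy} with $k_i=\kappa_i$ and $g_i(\kappa_i)=\kappa_i$. Here $\widetilde{G}_i(x)$ is the quadratic form $\tfrac12 (\text{voltage drop across edge } i)^2$, but its precise form is irrelevant to the argument.

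First compute $g_i'(\kappa_i)=1$. Hence $g_i(\kappa_i)/g_i'(\kappa_i)=\kappa_i$, whose antiderivative is $f_i(\kappa_i)=\tfrac12\kappa_i^2$, up to an irrelevant additive constant. Then check the key identity \eqref{eq:key-identity} directly. We have $\partial_{\kappa_i}\nabla_x G(x_0;\kappa)=\nabla_x\widetilde{G}_i(x_0)$ and $f_i'(\kappa_i)=\kappa_i$, so
\[
\sum_i f_i'(\kappa_i)\,\partial_{\kappa_i}\nabla_x G(x_0;\kappa)=\sum_i \kappa_i\nabla_x\widetilde{G}_i(x_0)=\nabla_x G(x_0;\kappa).
\]
This is exactly \eqref{eq:key-identity}, reflecting Euler's identity for a function that is homogeneous of degree one in $\kappa$. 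By the computation preceding Lemma~\ref{Lemma:energy_conservation}, $\dot K=\partial_\eta\big(G(x_\eta;\kappa)\big)\big|_{\eta=0}$.

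It remains to show this derivative vanishes. For CL, the minimality argument already given applies. For EP, I would use first-order optimality of $x_0$: the Lagrange condition gives $\nabla_x G(x_0;\kappa)=P\lambda$. Since $P^\top x_\eta=v$ for all $\eta$, we have $P^\top \frac{d}{d\eta}x_\eta=0$, and therefore $\nabla_x G(x_0)\cdot \frac{d}{d\eta}x_\eta|_{\eta=0}=\lambda\cdot P^\top \frac{d}{d\eta}x_\eta=0$. This argument in fact covers both methods at once.

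A minor point is the factor $\tfrac12$: the lemma states $K=\sum f_i$ while \eqref{eq:Conserved} carries a $\tfrac12$. Constant multiples of a conserved quantity are conserved, so either normalization gives $K(\kappa)=\tfrac12\sum_i\kappa_i^2$.

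The only genuine obstacle is making sure that $\frac{d}{d\eta}x_\eta|_{\eta=0}$ exists. This requires smoothness of $x_\eta$ in $\eta$. I would justify it via the implicit function theorem applied to the KKT system, using the assumed strict convexity of $G$ on the constraint subspace; for the linear circuit, the KKT matrix is nonsingular whenever the free state is unique. Everything else is bookkeeping.
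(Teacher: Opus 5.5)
Your proposal is correct and follows the paper's own route. The paper reads the theorem off from Lemma~\ref{Lemma:energy_conservation} with $g_i(\kappa_i)=\kappa_i$ and $f_i(\kappa_i)=\tfrac12\kappa_i^2$, and $\dot K=\nabla_x G(x_0;\kappa)\cdot y$ (with $y=\tfrac{d}{d\eta}x_\eta|_{\eta=0}$) then vanishes by minimality for CL, or for both EP and CL by $\nabla_x G(x_0;\kappa)\in\operatorname{range}P$ together with $P^\top y=0$, exactly as in your unified KKT argument. Your remarks on the harmless factor $\tfrac12$ and on getting differentiability of $x_\eta$ from the implicit function theorem cover points the paper leaves implicit.
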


\begin{theorem}[Resistance ``mass'' is conserved]
    Consider training the resistances instead of the conductances.  
    Then the energy is of the form
    \[
        G(x;\rho) = \sum_{i=1}^{M} \rho_i^{-1}\,\widetilde{G}_i(x),
    \]
    where $\rho_i$ denotes the $i$th resistance.
    In this case the conserved quantity is
    \[
        K(\rho) = \frac{1}{2}\sum_{i=1}^{M} \rho_i^2.
    \]
    \label{Thm:Res}
\end{theorem}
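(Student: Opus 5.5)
The plan is to apply Lemma~\ref{Lemma:energy_conservation} directly with $k_i=\rho_i$ and $g_i(\rho)=\rho^{-1}$. The energy $G(x;\rho)=\sum_i \rho_i^{-1}\widetilde{G}_i(x)$ already has the separable form \eqref{eq:Kirchoff_energy}. We only need to compute $g_i/g_i'$, integrate it to get $f_i$, and then fix the normalization and sign so that the result matches the stated $K$.

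First we compute $g_i'(\rho)=-\rho^{-2}$, so that
\[
  \frac{g_i(\rho)}{g_i'(\rho)}=\frac{\rho^{-1}}{-\rho^{-2}}=-\rho .
\]
An antiderivative is $f_i(\rho)=-\tfrac12\rho^2$. The Lemma then gives that $-\tfrac12\sum_i\rho_i^2$ is conserved. A conserved quantity stays conserved when multiplied by a nonzero constant, so we may equally take $K(\rho)=\tfrac12\sum_i \rho_i^2$.

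There is a normalization point to handle explicitly. The definition \eqref{eq:Conserved} carries a prefactor $\tfrac12$ in front of $\sum_i f_i$, while the Lemma's derivation uses $\dot K=\sum_i f_i'\dot k_i$. We will not track that constant. Since $\dot K=0$ is insensitive to any overall scalar, any constant multiple of $\sum_i f_i(\rho_i)$ is conserved. We will state this explicitly so the sign and the factor are not a source of confusion.

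The main thing to verify is that the hypotheses behind the Lemma hold on the admissible set $\rho_i>0$. There $g_i$ is smooth with $g_i'\neq0$, so $f_i'=g_i/g_i'$ is well defined. We will also check directly that \eqref{eq:key-identity} holds:
\[
  \sum_i(-\rho_i)(-\rho_i^{-2})\nabla_x\widetilde G_i(x_0)=\sum_i\rho_i^{-1}\nabla_x\widetilde G_i(x_0)=\nabla_x G(x_0;\rho).
\]
With the identity in hand, the CL argument of Subsection~\ref{CL C Laws} gives $\dot K=0$, and in the continuous dynamics the minus sign has no effect on conservation.
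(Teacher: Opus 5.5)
Your proposal is correct and is exactly the paper's route: the paper obtains this theorem immediately from Lemma~\ref{Lemma:energy_conservation} with $g_i(\rho)=\rho^{-1}$, so $f_i(\rho)=-\tfrac12\rho^2$, and neither the overall sign nor the stray factor $\tfrac12$ in \eqref{eq:Conserved} affects conservation. You cite only the CL argument for $\dot K=0$, but the same key identity also gives conservation for EP via the paper's unified KKT argument in Subsection~\ref{EP and CL C Laws}.
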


A conserved quantity exists when the circuit elements satisfy a scaling law. Specifically, let $\mathscr{I}_i(V,k_i)$ be the current through the $i$th element as a function of the voltage $V$ across it and the training parameter $k_i$. If changing $k_i$ only serves to amplify the current, i.e.,\@ has the form \ $\mathscr{I}_i(V,k) = g_i(k)\,\widetilde{\mathscr{I}}_i(V)$, then a conserved quantity exists.

\begin{example}{\textbf{Exponentially parameterized conductances.}}
Suppose all conductances are parametrized by $\kappa=e^{k}$, where $k$ is a trainable parameter. Then $\mathscr{I}(V,k)= e^{k}V$, and the conserved quantity is
\begin{equation*}
    K = \sum_{i=1}^{M}k_i
\end{equation*}
\label{Ex:exponential_parameter}
\end{example}

Different elements can have different current-voltage relationships and a conserved quantity can still exist. For example, on some edges, we can train resistances, on others we can train conductances, and still on others, train conductances with the parametrization given in Example \ref{Ex:exponential_parameter}. In such a case, there is a conserved quantity.

\begin{example}{\textbf{Diode in series with a conductance.}}
Suppose we have an ideal diode, which is off when the voltage is negative, in series with a trainable conductance, $\kappa$. The current is given by
\begin{equation*}
    \mathscr{I}(V,\kappa) = \begin{cases}
        \kappa V & V>0
        \\ 
        0 & V \leq 0.
    \end{cases}
\end{equation*}
As this has the separable form, a conservation law holds.
\end{example}

Conservation does not hold in all scenarios. Circuit elements that go untrained break conservation. Fixed current inputs break conservation. Circuit elements whose current-voltage relationships do not separate can break conservation.

\begin{example}{\textbf{Shockley diode.}}
    The current through such a diode with parameter $k$ may be modeled by
    \[\mathscr{I}(V,k)=e^{kV}-1.\]
    The intertwining of $k$ and $V$ in the exponential { \bf breaks} conservation.
\end{example}

\subsection{Conservation for EP}
\label{EP C Laws}

Conservation in EP can be seen as a consequence of the fact that EP performs
gradient flow on a scale invariant loss. To make this precise, assume the energy is
$p$–homogeneous in the parameters:
\[
    G(x;\lambda k) = \lambda^{p} G(x;k)
    \qquad \text{for all } \lambda>0.
\]
Under this assumption and using the strict convexity of $G(\cdot \, ;k)$ on the constraint surface, the error 
\[ r(k) := Q^\top x_0(k) - w \] 
is scale invariant ($0$–homogeneous) in $k$, and therefore the loss
\[
    \Phi(k) := \frac{1}{2}\|r(k)\|^{2}
\]
is scale invariant. As a consequence, $\Phi$ is constant along any ray
$\{\lambda k : \lambda>0\}$ emanating from the origin.  Since EP executes
gradient flow in $k$, its update direction is orthogonal to these level sets.
Therefore the squared radius
\[
    K(k) := \sum_{i=1}^{M} k_i^{2}
\]
must remain constant along the EP flow.  This yields both Theorems
\ref{Thm:Cond} and \ref{Thm:Res} for EP.

For completeness, we now explain precisely why EP performs gradient flow and
contrast this with what occurs in CL. The details of this computation are
used in the sequel for an independent proof of conservation for both EP and CL. 

\paragraph{Gradient of the Loss.}
By convexity of $G(\cdot \,;k)$, \eqref{eq:free-state} is equivalent to the KKT system
\begin{align}
\nabla_x G(x_0; k) + P\lambda = 0,\qquad P^\top x_0 = v, \label{eq:kkt}
\end{align}
for a Lagrange multiplier $\lambda\in\mathbb{R}^I$.

Taking the total derivative of \eqref{eq:kkt} with respect to $k_i$ gives
\begin{align}
\begin{bmatrix}
H & P\\
P^\top & 0_{I \times I}
\end{bmatrix}
\begin{bmatrix}
\partial_{k_i}x_0\\[2pt]
\partial_{k_i}\lambda
\end{bmatrix}
= -
\begin{bmatrix}
\partial_{k_i}\nabla_x G\\
0_{I \times 1}
\end{bmatrix},\qquad
H := \nabla_x^2 G(x_0;k).\label{eq:lin-kkt},
\end{align}
 where $0_{N \times M}$ and $N \times M$ matrix of zeros. Let
\begin{align}
S := \begin{bmatrix} Id_{N\times N}\\ 0_{I\times N}\end{bmatrix},\qquad
R := S^\top
\begin{bmatrix}
H & P\\
P^\top & 0_{I \times I}
\end{bmatrix}^{-1}\!
S,\label{eq:R}
\end{align}
where $Id_{N \times N}$ is the identity matrix. By the strict convexity of $G(\cdot\,;k)$ on the constraint surface, $H$ is positive definite on $\ker P^\top$, which ensures the bordered Hessian is invertible. Hence
\begin{align}
\partial_{k_i}x_0 = - R\,\partial_{k_i}\nabla_x G.\label{eq:phi-sens}
\end{align}
and
\begin{equation}\partial_{k_i} \Phi(k) = \langle Qr,  \partial_{k_i}x_0(k)\rangle =-\langle RQr,  \partial_{k_i}\nabla_x G\rangle . 
\label{eq:dynamics-grad}
\end{equation}

\paragraph{EP Executes Gradient Flow.}

The KKT conditions for the EP minimization problem \eqref{eq:EP} read
\begin{align}
\nabla_x G(x_\eta) + P\lambda_\eta = \eta Qr,\qquad P^\top x_\eta = v.\label{eq:kkt-eta-EP}
\end{align}
Differentiating in $\eta$ at $\eta=0$ yields
\begin{align}
\begin{bmatrix}
H & P\\
P^\top & 0
\end{bmatrix}
\begin{bmatrix}
y\\
\mu
\end{bmatrix}
=
\begin{bmatrix}
Qr\\
0
\end{bmatrix}, \quad \mu :=\left.\dfrac{d}{d\eta}\lambda_\eta\right|_{\eta=0} ,\quad
y := \left. \frac{d}{d\eta}x_\eta\right  |_{\eta=0}\label{eq:psi}
\end{align}
We can solve $y = RQr.$ Hence
\begin{align}
\dot{k}_i=\lim_{\eta\to 0}\frac{1}{\eta}\Big(\partial_{k_i}G(x_\eta;k)-\partial_{k_i}G(x_0;k)\Big)
= \big\langle y, \nabla_x \partial_{k_i}G\big\rangle
= \big\langle R Qr,\partial_{k_i}\nabla_x G\big\rangle\label{eq:dynamics-EP},
\end{align}
which is the negative of $\partial_{k_i}\Phi$.
The dynamics of the loss function are given by
\begin{equation}
    \dot{\Phi} = - \sum_{i=1}^M\big\langle R Q r,\partial_{k_i}\nabla_x G\big\rangle^2=- \sum_{i=1}^M\dot{k}_i^2.
\end{equation}
Therefore the loss must decrease when the $k$ are not at a fixed point.

\paragraph{Not Quite Gradient Flow for CL.}
Recall the nudged energy minimization problem \eqref{eq:CL} with minimizer $x_{\eta}$. The KKT conditions read
\begin{align}
\nabla_x G(x_\eta;k) + P\lambda_\eta +Q\nu_\eta = 0,\quad P^\top x_\eta = v, \quad Q^\top x_\eta = Q^\top x_0 + \eta r
\label{eq:kkt-eta-CL}
\end{align}
with Lagrange multipliers given by $ \lambda_\eta \in \mathbb{R}^I$ and $\nu_\eta \in \mathbb{R}^{O}.$ Differentiating in $\eta$ at $\eta=0$ yields
\begin{align}
\begin{bmatrix}
H & P & Q\\
P^\top & 0 & 0 \\
Q^\top & 0 & 0
\end{bmatrix}
\begin{bmatrix}
y\\[2pt]
\mu\\
\omega
\end{bmatrix}
=
\begin{bmatrix}
0\\
0\\
r
\end{bmatrix}, \quad  \omega =\left.\dfrac{d}{d\eta}\nu_\eta\right|_{\eta=0} ,\quad \mu =\left.\dfrac{d}{d\eta}\lambda_\eta\right|_{\eta=0} ,\quad
y:= \left. \frac{d}{d\eta}x_\eta\right |_{\eta=0}  \label{eq:psi2}
\end{align}
Solving, we find $y = R Q (Q^\top RQ)^{-1}r$. Hence
\begin{align}
\dot{k}_i=\lim_{\eta\to 0}\frac{1}{\eta}\Big(\partial_{k_i}G(x_\eta;k)-\partial_{k_i}G(x_0;k)\Big)
= \big\langle y, \nabla_x \partial_{k_i}G\big\rangle
= \big\langle R Q u_r,\partial_{k_i}\nabla_x G\big\rangle.\label{eq:dynamics-CL}
\end{align}
where $u_r:=(Q^\top RQ)^{-1}r$, which is very similar to the dynamics of EP. 

In the special case where $r$ is scalar so $Q$ is column vector, $(Q^\top RQ)^{-1}$ is a positive scalar quantity. The dynamics of the loss function are given by
\begin{equation}
\dot{\Phi}
= - (Q^\top RQ)^{-1}\sum_{i=1}^M\big\langle R Q r,\partial_{k_i}\nabla_x G\big\rangle^2
= - (Q^\top RQ)\sum_{i=1}^M \dot{k}_i^2
\;\le\;0.
\end{equation}
Therefore the loss must decrease whenever $\dot{k}\neq 0$. This rules out limit cycles for the special case of a one dimensional output.

\subsection{Conservation for EP and CL}
\label{EP and CL C Laws}

Now we can use the form of the KKT equations to prove the conservation law for both EP and CL. As this proof works for both EP and CL, it is in some sense the essential proof. 

Suppose again that we have defined $K$ as in \eqref{eq:Conserved}, and we have that \eqref{eq:key-identity} holds.  Then we only need to check (for both EP and CL) that

\begin{equation}
   \dot{K}=\langle RQr, \nabla_x G(x_0 ;k)\rangle =\langle RQu_r, \nabla_x G(x_0 ;k)\rangle =0
\end{equation}

However, using \eqref{eq:kkt}, this is equivalent to checking that 

\begin{equation}
    \langle P^\top RQr, \lambda \rangle =\langle P^\top RQu_r, \lambda\rangle =\langle P^\top y,\lambda\rangle=0,
\end{equation}
which holds by considering \eqref{eq:psi} and \eqref{eq:psi2}.

Therefore, we have obtained Lemma \ref{Lemma:energy_conservation} for both EP and CL and as a consequence, Theorems \ref{Thm:Cond}
 and \ref{Thm:Res}.

\section{Applications of Conservation}
\label{Cons of C}
In this section we restrict our attention to linear circuits. We use the conserved quantities identified above to     
establish convergence results for both CL and EP. Specifically, in Subsection \ref{Convergece Argument}, we show that the conservation law constrains the      
learning dynamics strongly enough to force convergence to a solution, provided the circuit does not degenerate by
losing all paths between input and output. We prove this in full generality for bipartite circuits (crossbar arrays),
where we obtain exponential convergence of the loss. For general circuits with a single input and output, we prove
convergence under the assumption that no conductance reaches zero. In Subsection \ref{sec:practical_implications}, we discuss the practical implication of conservation i.e.\@ how the choice of nudging method affects whether conductances vanish.

\paragraph{Linear Circuits.}
Here we lay out the notation for a linear circuit. Let $\mathcal{V}=\{1,\ldots,N\}$ be the set of nodes. The state vector $x = (x_1,\ldots,x_N) \in \mathbb{R}^N$ gives voltages at nodes. We define the edge set
\begin{equation}
    \mathcal{E} = \{(i,j) \ | \ i,j\in[N], i \sim j, i >j\},
\end{equation}
where $[N]:=\{1,\ldots,N\}$, and we have $|\mathcal{E}| = M$. We may lexicographically order the edges and use this ordering to define the $M$-tuple of conductances $\kappa$. Then $e$ can be construed as the edge or the index of that edge, where $e \in \{1,2,\ldots,M\}$.

For $e \in \mathcal{E}$ with $e = (i,j)$, we define the incidence vector as $d_e:=e_{i} -e_{j}$ where $e_i,e_j \in  \mathbb{R}^N$ are the natural unit basis vectors. Analogously $\kappa_e = \kappa_{i,j}$.

We distinguish explicitly between conductances $\kappa$ and resistances $\rho$. The \textit{circuit Laplacian} $L$ is defined as
\begin{equation}
    L :=\sum_{e=1}^{M}\kappa_ed_ed_e^\top = \sum_{e=1}^{M}\dfrac{1}{\rho_e}d_ed_e^\top.
    \label{The_Laplacian}
\end{equation}
Assuming all conductances are positive and the circuit is connected, $L$ is symmetric and positive semi-definite with $\ker L = \operatorname{span}\{\mathbf{1}\}$. Here $\mathbf{1} \in \mathbb{R}^{N}$ is the vector of all ones.

The energy function $G$ is given by
\[ G(x ; \kappa) := \dfrac{1}{2}\langle x, Lx\rangle,\]
which is convex in $x$. We assume that the input matrix $P$ is chosen so that the constrained minimization of $G$ subject to $P^\top x = v$ has a unique solution; equivalently, $\ker L \cap \ker P^\top = \{0\}$. To handle the input constraints, we define the \textit{input-constrained Laplacian} as
\begin{equation}
    \hat{L} = \begin{bmatrix}
        L & P \\ P^\top & 0
    \end{bmatrix}.
    \label{eq:hat-L}
\end{equation}
Since $L$ is positive definite on $\ker P^\top$, $\hat{L}$ is invertible. We define  $$\hat{x} := \begin{bmatrix}x \\ \lambda\end{bmatrix}, \quad  \hat{v} := \begin{bmatrix}0_{N \times 1} \\ v\end{bmatrix}, \quad \hat{d}_e := \begin{bmatrix}d_e \\ 0_{I \times 1}\end{bmatrix}, \quad \text{and} \quad \hat{Q}:= \begin{bmatrix} Q \\ 0_{I \times O}\end{bmatrix},$$ where $\lambda$ are the Lagrange multipliers for the input constraints, Kirchhoff's law gives $\hat{L}\hat{x}_0 = \hat{v}$. The multiplier $P\lambda$ may be interpreted as the current injection needed to sustain the input nodes at the constrained voltages $v$. 

Our conservation theorems, Theorem \ref{Thm:Cond} and \ref{Thm:Res}, tell us that 
$
    \frac{1}{2}\sum_{e=1}^{M} \kappa_e^2
$
is conserved when we train on conductances, while
$
    \frac{1}{2}\sum_{e=1}^{M} \rho_e^2
$
is conserved when we train on resistances. As a consequence, it ensures that no conductances can grow without bound and thus eliminates the need for an artificially enforced upper boundary. However, the worry for negative conductances persists, and we make an extended remark on this below. 
\paragraph{Symmetric Clamping.}
An alternative to the learning rule \eqref{eq:continuous-dynamics} is the symmetric or central-difference update
\begin{equation}
    \dot{\kappa}_e
    = \dfrac{1}{2\eta}\Big(\partial_{\kappa_e}G(x_\eta;\kappa) - \partial_{\kappa_e}G(x_{-\eta};\kappa)\Big).
    \label{eq:symmetric-clamping}
\end{equation}
In linear circuits, $x_\eta$ depends linearly on $\eta$: we can write $x_\eta = x_0 + \eta y$ where $y = \left.\frac{d}{d\eta}x_\eta\right|_{\eta=0}$. Since $G$ is quadratic in $x$, the quantity $\partial_{\kappa_e}G(x_\eta;\kappa)$ is quadratic in $\eta$. The central difference \eqref{eq:symmetric-clamping} eliminates the $\eta$ term exactly, and we obtain
\[
    \dot{\kappa}_e = \partial_{\kappa_e}\nabla_x G(x_0;\kappa) \cdot y,
\]
which is precisely the $\eta \to 0$ dynamics \eqref{eq:leading-order-dynamics}. Therefore, in linear circuits, symmetric clamping at any finite $\eta$ conserves $K$ exactly. For non-linear circuits, we expect conservation to hold up to $O(\eta^2)$ under symmetric clamping.

\paragraph{Physical Interpretation.}
Attempting to implement EP or CL physically with a very small nudging parameter
$\eta$ may be difficult, since it requires reliably imposing a tiny perturbation
in hardware. By the preceding observation, symmetric clamping at any finite $\eta$ achieves the same dynamics and therefore conserves $K$ for all $\eta$. We can also note that these dynamics can be implemented directly without any nudging parameter at all.

Applying \eqref{eq:dynamics-EP} to the conductance circuit, the EP update in the limit $\eta\to 0$ is
\begin{equation}
    \dot{\kappa}_e
      = \bigl(d_e^\top RQ\, r\bigr)\,
      \bigl(d_e^\top R v\bigr),
      \label{eq:EP-cirucit}
\end{equation}
while from \eqref{eq:dynamics-CL} the CL update is
\begin{equation}
    \dot{\kappa}_e
    = \bigl(d_e^\top RQ\, u_r\bigr)\,
      \bigl(d_e^\top R v\bigr),
      \label{eq:CL-circuit}
\end{equation}
where $u_r := (Q^\top RQ)^{-1} r$ and $R = S^\top \hat{L}^{-1} S$ as defined in \eqref{eq:R}.

The quantity $d_e^\top R v = \hat{d}_e^\top \hat{L}^{-1} \hat{v}$ is the voltage drop across edge $e$ in
the free circuit (i.e.\ the circuit with no nudging). The quantities
$d_e^\top RQ\,r$ and $d_e^\top RQ\,u_r$ are voltage drops in a
circuit with current inputs clamped at output nodes, \emph{but} with the additional feature that the input nodes are clamped to $0$ voltage. This can be seen by considering \eqref{eq:psi} for EP and \eqref{eq:psi2} for CL. In the EP case, the current input at the output nodes is exactly the same as taking $\eta =1$ in the clamping step. For CL, the current input at the output node is exactly the current needed to clamp the output node voltage to the error $r$, since $(Q^{\top} RQ)^{-1}$ is a discrete version of a Dirichlet-Neumann operator. We refer to these as \emph{no-nudge-clamping} and their physical interpretation is important to us in the sequel.

Thus one can perform learning without introducing a small
$\eta$: instead of taking the difference between the clamped and free voltage
drops, one simply measures both voltage drops and multiplies them to obtain the update in~\eqref{eq:EP-cirucit} or
\eqref{eq:CL-circuit}. Taking a difference of voltages becomes important in the non-linear setting.

\paragraph{Handling Vanishing Conductances.} One potential difficulty in the learning process is that one or more
conductances may converge to zero. When this occurs, we treat the corresponding edge as having been removed from the circuit to avoid negative conductances. We note that removing edges in such a fashion maintains the conserved quantity. Still, the network risks becoming
disconnected.  This situation is subtle: if the input and output nodes remain
connected, nothing essential is lost, and we can restrict our attention to the
remaining connected subnetwork.  However, it is a priori difficult to rule out
the possibility that training causes all the connections between the inputs and outputs to become severed.

Another possible rule for the learning dynamics, which is common among practitioners of physical learning and would not involve removing edges with zero conductance, is the following. Rather than deleting an edge whose conductance has vanished, we let the conductance update only when the update would have it increase from $0$. We call this the \textit{reflecting rule}, and define its dynamics by:
\[ \dot{\kappa}^{\rm (ref)}_{e} \coloneqq \begin{cases}
\dot{\kappa}_e & \kappa_e>0 \ \text{or} \ \dot{\kappa}_e\geq 0
\\
0 & \text{Otherwise}
\end{cases}.\]
The rule requires that $\dot{\kappa}_e$ is actually defined for an edge $e$ whose conductance is $0.$ Unfortunately, it cannot yet be disproven that pathological situations may lead to isolated nodes with ill-defined voltages, thus making the reflecting rule ill-defined. However, if such a rule can be defined, it preserves the conservation law.

\subsection{Convergence of CL and EP}
\label{Convergece Argument}
Throughout the remainder of the paper ``clamping", ``clamped state", etc.\ refer to no-nudge-clamping defined in reference to the first terms of the right-hand side of  \eqref{eq:EP-cirucit} and \eqref{eq:CL-circuit}. We also adopt the following convention:
\[
\begin{aligned}
    &\text{\textbf{Whenever an edge's conductance reaches $0$ during training,}}
    \\
    &\text{\textbf{we remove that edge from the circuit.}}
\end{aligned}
\]
This procedure is compatible with the conservation law, since an edge with
zero conductance contributes nothing to the conserved quantity at the moment it
is removed.  \textit{In particular, the circuit cannot vanish completely.}  This is
highly suggestive that the algorithm cannot fail to converge to a fixed point,
since it seems implausible that the circuit could degenerate in such a way that
the input and output nodes become disconnected. Making this intuition rigorous,
however, is difficult. 

For now, we obtain an actual theorem by simply excluding the case in which
conductances reach $0$.  We consider a circuit with a single input node, a
single output node, and a ground.  Without loss of generality, we label the input
node $1$, the output node $2$, and, following our convention, the ground node
$0$.  We impose the constraint $x_1 = 1$ at the input node.  We assume that, at
the outset, none of the conductances are zero and that the input, output, and
ground nodes are all connected.  The target is
$x_2 = w$ with $w\in(0,1)$, so the desired output is clearly feasible.  A
representative example is shown in Figure~\ref{fig:Basic Circuit}.
\begin{figure}[ht]
\centering
\begin{tikzpicture}[scale=1.1, every node/.style={font=\small}]
\definecolor{mygreen}{RGB}{30,120,30}

\node[fill=red,circle,inner sep=2pt,
      label={[red]below:$x_1=1$}] (A) at (0,0) {};

\coordinate (J) at (3,0);

\node[fill=blue,circle,inner sep=2pt,
      label={[blue]below right:$x_2 = w$}] (C) at (5,0.8) {};

\node[fill=red,circle,inner sep=2pt,
      label={[red]below right:$x_0 = 0$}] (B) at (8,1.2) {};

\coordinate (U1) at (1,2);
\coordinate (U2) at (3.2,2.1);
\coordinate (U3) at (4.8,2.0);

\coordinate (D1) at (2.2,-1.8);
\coordinate (D2) at (5.2,-1.7);

\draw[thin] (A) -- (U1) -- (U2) -- (J);
\draw[thin] (A) -- (U2);

\draw[thin] (U2) -- (U3) -- (C) -- (J) -- cycle;
\draw[thin] (U2) -- (C);

\draw[thin] (J) -- (D1) -- (D2) -- (C) -- cycle;
\draw[thin] (J) -- (D2);

\draw[thin] (C) -- (B);

\draw[thin] (B) -- (D2);

\draw[very thick, mygreen] (A) -- (J) -- (C) -- (B);

\end{tikzpicture}
\caption{A basic circuit with input $x_1 = 1$, desired output $x_2 = w$, ground
at $x_0 = 0$, and a path connecting all three.}
\label{fig:Basic Circuit}
\end{figure}

We have the following theorem.

\begin{theorem}
    In the single-input/single-output setting described above, for both EP and
    CL, either the circuit converges to a configuration with $x_2 = w$, or at
    least one conductance tends to $0$.
    \label{Lemma:converge}
\end{theorem}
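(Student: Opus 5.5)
The plan is to combine the Lyapunov structure from Subsection~\ref{EP C Laws} with the compactness that the conservation law (Theorem~\ref{Thm:Cond}) provides. I work under the alternative that no conductance tends to $0$, and aim to show that $x_2 \to w$.

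First, since $r$ is scalar, both the EP and CL dynamics decrease the loss:
\[
\dot\Phi = -c(\kappa)\sum_e \dot\kappa_e^2 \le 0,
\]
with $c\equiv 1$ for EP and $c=Q^\top RQ$ for CL. Theorem~\ref{Thm:Cond} confines $\kappa(t)$ to the sphere $\sum_e\kappa_e^2=2K(0)$, so the trajectory is bounded. Assume $\kappa_e(t)$ stays bounded away from zero, passing to a subsequence if necessary. Then the trajectory lies in a compact set on which $L$ is a genuine weighted connected Laplacian, $\hat L$ is uniformly invertible, and $R$, $r$ and $Q^\top RQ$ are smooth. The quantity $Q^\top RQ$ is positive, being the effective resistance from the output node to the grounded inputs, so it is bounded below on this set.

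Second, integrating $\dot\Phi$ gives $\int_0^\infty \sum_e\dot\kappa_e^2\,dt<\infty$. Because $\dot\kappa$ is Lipschitz in $\kappa$ on the compact set, this forces $\dot\kappa(t)\to 0$. The $\omega$-limit set is therefore nonempty, compact, and consists of equilibria, by LaSalle's invariance principle. The remaining task is to show that every equilibrium with all $\kappa_e>0$ has $r=0$.

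Third, the core step is to show that if $\dot\kappa=0$ then $r=0$. Write $a=R v$ for the free voltages and $b=RQ$ for the voltages produced by a unit current injection at node $2$ with input and ground held at $0$. The equilibrium condition from \eqref{eq:EP-cirucit}/\eqref{eq:CL-circuit} reads
\[
r\,(d_e^\top b)(d_e^\top a)=0 \quad \text{for all } e.
\]
Suppose $r\neq 0$. Then $(d_e^\top a)(d_e^\top b)=0$ on every edge, so
\[
\sum_e \kappa_e (d_e^\top a)(d_e^\top b)=0.
\]
I would instead weight each edge by a positive constant $f_e$, chosen so that the sum is forced to be nonzero. Use Green's reciprocity together with the sign structure of harmonic functions: $a$ is harmonic off $\{0,1\}$ with values in $[0,1]$, and $b\ge 0$ is harmonic off $\{0,1,2\}$ with $b=0$ at nodes $0,1$. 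By Thomson's principle $\partial x_2/\partial\kappa_e$ is proportional to $(d_e^\top a)(d_e^\top b)$, since $\partial_{\kappa_e}\Phi = -r\,(d_e^\top a)(d_e^\top b)$. If every such derivative vanishes, $x_2$ is locally constant in all the $\kappa_e$. That contradicts the explicit behaviour of conductances along a path $1\to2\to0$ (the green path in Figure~\ref{fig:Basic Circuit}): varying them moves $x_2$, since $x_2$ can approach $1$ or $0$.

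To make this rigorous, I would use the fact that $x_2(\kappa)$ is real-analytic on the open positive orthant, which is connected. A critical point alone does not give constancy, so I would argue directly instead. The current $b$ injected at node $2$ exits through $\{0,1\}$, and the current $a$ flows from $1$ to $0$. Follow a path of edges along which $b$ strictly decreases from node $2$ to the set $\{0,1\}$; the maximum principle, with $b(2)>0$, supplies one. On each edge of this path $d_e^\top b\neq0$, so $d_e^\top a=0$, meaning $a$ is constant along the path. Then $a(2)\in\{0,1\}$. This contradicts $a(2)\in(0,1)$, which holds because node $2$ is connected to both $0$ and $1$ with all conductances positive, by the strict maximum principle. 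Hence $r=0$ at every interior equilibrium, $\Phi\to0$, and $x_2\to w$.

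The \textbf{main obstacle} is the second step when the $\kappa_e$ are not uniformly bounded below but also do not tend to $0$, i.e.\ they oscillate near $0$. In that case I would work along subsequences and use continuity of $\Phi$ to conclude that $\Phi$ decreases to the limit value attained at accumulation points with positive entries. Some care is needed to interpret ``tends to $0$'' as $\liminf$.
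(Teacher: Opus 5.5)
Your proof is correct in substance, but both halves take a different route from the paper's. One point in the dynamical half needs tightening.

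\textbf{The fixed-point step.} For the step showing that an equilibrium with all $\kappa_e>0$ has $r=0$, the paper picks a single edge $e=(j,0)$ adjacent to ground on a path from node $2$ to node $0$. It then applies the maximum principle twice to show that both the free drop $d_e^\top Rv$ and the clamped drop $d_e^\top RQ$ are nonzero on that one edge. You instead follow a path from node $2$ to $\{0,1\}$ on which $b=RQ$ strictly decreases, and conclude that $a$ would have to be constant along it, forcing $a(2)\in\{0,1\}$. This has two advantages: you never need to locate an edge where both drops are nonzero, and it does not matter whether the path ends at $0$ or at $1$. The Thomson/analyticity detour before your direct argument should simply be deleted.

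\textbf{The dynamics.} The paper argues on accumulation points: conservation gives compactness, interior accumulation points are fixed points, and interior fixed points have $r=0$. You instead integrate $\dot\Phi$ and invoke Barbalat and LaSalle. That makes explicit a step the paper only asserts. However, your Barbalat step needs the entire tail of $\kappa(t)$ to lie in a fixed compact subset of the open orthant, and ``passing to a subsequence'' cannot supply this. As written, you therefore prove the dichotomy with ``tends to $0$'' weakened to $\liminf_{t\to\infty}\min_e\kappa_e(t)=0$.

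Your proposed repair for oscillating trajectories does not reach the paper's alternative, namely that either $x_2\to w$ or every accumulation point lies on the boundary. Continuity of $\Phi$ at an interior accumulation point $\kappa^*$ gives only $\Phi_\infty=\Phi(\kappa^*)$, not $\Phi(\kappa^*)=0$. The missing ingredient is a local argument that $\kappa^*$ is an equilibrium:
\begin{itemize}
\item Suppose $\dot\kappa(\kappa^*)\neq0$. Take a ball of radius $\rho$ about $\kappa^*$, inside the open orthant, on which $c(\kappa)\sum_e\dot\kappa_e^2\ge\varepsilon>0$ and $|\dot\kappa|\le M$.
\item The trajectory returns to the ball of radius $\rho/2$ at times $t_n\to\infty$, which can be chosen at least $\rho/(2M)$ apart.
\item After each return it stays in the larger ball for time $\rho/(2M)$, so $\Phi$ drops by at least $\varepsilon\rho/(2M)$ each time. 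This contradicts $\Phi\ge0$.
\end{itemize}
Hence $\kappa^*$ is an equilibrium, so $r(\kappa^*)=0$, and monotonicity of $\Phi$ gives $\Phi(t)\to0$. This needs no uniform lower bound on the conductances.
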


\begin{proof}
Since the conductance vector $\kappa$ evolves on the intersection of the sphere
$\|\kappa\| = C$ (by conservation) with the closed positive orthant
$\{\kappa_i \ge 0\}$, the state space is compact.  Thus every trajectory has at
least one accumulation point.  Any such limit point either lies on the boundary
(where at least one conductance is zero) or is an interior point with all
conductances strictly positive.  In the latter case, the limit point must be a
fixed point of the dynamics.

It therefore suffices to show that any fixed point with all conductances
strictly positive satisfies $x_2 = w$.  Fixed points require
$\dot{\kappa}_e = 0$ for all $e$.  Recalling the EP update \eqref{eq:EP-cirucit}, for a fixed point we have that for all $e$
\begin{equation}
 0 = \bigl(d_e^\top RQ r\bigr)\,
                     \bigl(d_e^\top R v\bigr),
\end{equation}
while using \eqref{eq:CL-circuit} yields
\begin{equation}
    0 = \bigl(d_e^\top RQ u_r\bigr)\,
                     \bigl(d_e^\top R v\bigr)
\end{equation}
for CL.

Choose $e$ so that it lies on a path from node $2$ (the output) to
node $0$ (the ground) and is connected directly to ground at one end. Since node $1$ is connected to node $2$ which is grounded via $e$, the maximum principle applies: the non-ground node connected to $e$ cannot obtain a minimum $(0)$ or maximum $(1)$ in the free state. Thus in the free state, there must be voltage drop over $e$:
$d_e^\top R v \neq 0$. Consequently, a fixed point with all
conductances positive must satisfy
\[
    d_e^\top RQ r = 0 \quad\text{for EP}, \qquad
    d_e^\top RQ u_r = 0 \quad\text{for CL}.
\]
For EP (CL), in this clamped state, no interior node along paths connecting node $2$ to node $0$ may obtain a voltage of $0$ due to the maximum principle, as long as $u_r \neq 0$ ($r \neq 0$). Therefore, there must be voltage drop across $e$. We note that $u_r =0$ if and only if $r = 0$ since the inputs and outputs are connected, and therefore, the only fixed points for which conductances do not vanish satisfy $r =0.$

\end{proof}

\begin{remark}[On the possibility of edge death]\label{rmk:edge_death}
Theorem~4.1 
leaves open the possibility that one or more conductances tend to zero,
potentially disconnecting the input from the output.  We argue briefly that
this is unlikely in practice, and can be ruled out in certain cases.

Consider a circuit with a single input node, a single output node, and a
ground, and suppose there is exactly one ``bridge'' edge whose removal would
disconnect the input from the output.  Let $\kappa_1$ denote its conductance
and suppose $\kappa_1$ is small while all other conductances remain order one.
Then in the free state, the voltage drop across the bridge is order one
(the two components are nearly decoupled, so their voltages are set by their
respective boundary conditions).  Meanwhile, in the clamped state, the
error $r \neq 0$ forces a nontrivial voltage drop across the bridge as well,
by the maximum principle.  Since the update rule multiplies these two voltage
drops, we obtain $\dot{\kappa}_1 > 0$: the bridge edge is pushed away from zero.

This argument relies on the specific topology (a single bridge) and does not
easily extend to the case where multiple edges vanish simultaneously, since
the voltage structure can degenerate in more complicated ways.
In the next subsection, we consider bipartite circuits, where the
conservation law is strong enough to give a complete convergence result
without needing to track individual edges.
\end{remark}

\subsubsection{Convergence in bipartite circuits}

The crossbar array is a bipartite circuit in which inputs form one partition and outputs form another, and all input-output pairs have an edge. Two representations of such a circuit are found in Figure \ref{fig:bipart}. Circuits of this type are widely studied in neuromorphic computing, where the edges used are frequently not linear resistors \cite{xiaMemristiveCrossbarArrays2019}. Moreover, some crossbars are used as voltage-in current-out devices, but here we consider the case in which the outputs are voltages. 

\begin{figure}
    \centering
    \begin{tikzpicture}[scale=1.0, every node/.style={font=\small}]

\begin{scope}[xshift=0cm]

\foreach \i in {1,...,4} {
  \node[fill=red, circle, inner sep=2.5pt, label=left:{$v_{\i}$}] (I\i) at (0, -\i*1.2 + 0.6) {};
}

\foreach \j in {1,2} {
  \node[fill=blue, circle, inner sep=2.5pt, label=right:{$w_{\j}$}] (O\j) at (3.5, -\j*1.8 + 0.3) {};
}

\foreach \i in {1,...,4} {
  \foreach \j in {1,2} {
    \draw[thin, black!70] (I\i) -- (O\j);
  }
}

\node at (1.75, -5.0) {(a) Bipartite graph};

\end{scope}

\begin{scope}[xshift=8cm]

\foreach \i in {1,...,4} {
  \draw (-0.5, -\i*1.0) -- (3.5, -\i*1.0);
  \node[left] at (-0.5, -\i*1.0) {$v_{\i}$};
}

\foreach \j in {1,2} {
  \draw (\j*1.5, 0) -- (\j*1.5, -5.0);
  \node[above] at (\j*1.5, 0) {$w_{\j}$};
}

\foreach \i in {1,...,4} {
  \foreach \j in {1,2} {
    \filldraw[fill=white, draw=black] (\j*1.5 - 0.2, -\i*1.0 - 0.15) rectangle (\j*1.5 + 0.2, -\i*1.0 + 0.15);
    \node[font=\tiny, above right] at (\j*1.5 + 0.2, -\i*1.0) {$\kappa_{\i\j}$};
  }
}

\node at (1.75, -5.7) {(b) Crossbar array};

\end{scope}

\end{tikzpicture}
    \caption{A bipartite graph circuit on the left together with its corresponding crossbar array representation}
    \label{fig:bipart}
\end{figure}

Among all design choices for linear circuits, the crossbar array is  sufficient, when it comes to specifying voltage input-output relationships. We make this statement precise with Theorem \ref{Thm:Crossbar_sufficient} below. Given a linear circuit $C$, with edge conductances $\kappa_e$, define $A_C(\kappa):\mathbb{R}^I\to \mathbb{R}^O$ as the input-output map. Specifically, for an input voltage constraint $P^\top x_0 = v,$ we have that $ A_C(\kappa)v:= Q^\top x_0$. The theorem is as follows:

\begin{theorem}
[Crossbar is sufficient]
    \label{Thm:Crossbar_sufficient}
    For any linear circuit $C$ with conductances $\kappa_e\geq0$ and input-output map $A_C(\kappa)$, there exists a crossbar array $C'$ with conductances $\kappa'_{ij}$ which has the same input-output map. That is, $A_{C'}(\kappa') = A_C(\kappa)$.
    \label{Thm:Crossbar}
\end{theorem}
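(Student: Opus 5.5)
The plan is to show that the input--output map of any linear circuit has a very rigid algebraic form, namely a row-stochastic matrix, and then to check that every row-stochastic matrix is realized by a crossbar. Throughout, the ground node (if present) is treated as one of the input nodes with prescribed voltage $0$. Then $P^\top x = v$ fixes the voltages on the set $\mathcal{B}$ of boundary (input) nodes. We assume, as in the setup, that $\ker L\cap\ker P^\top=\{0\}$, so that $x_0$ is unique.

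\textbf{Step 1: Kron reduction.} First I would discard edges with $\kappa_e=0$; they do not affect $L$. Next I would split the nodes into $\mathcal{B}$ and the free nodes $\mathcal{F}$, and write $L$ in block form with blocks $L_{\mathcal{FF}}$ and $L_{\mathcal{FB}}$. Kirchhoff's law at the free nodes reads $L_{\mathcal{FF}}x_{\mathcal{F}}+L_{\mathcal{FB}}v=0$.

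Uniqueness of $x_0$ forces every connected component of the circuit to contain an input node. Otherwise the indicator vector of an input-free component would lie in $\ker L\cap\ker P^\top$. Hence $L_{\mathcal{FF}}$ is an irreducibly diagonally dominant Z-matrix on each component, with strict dominance somewhere in each component. It is therefore a nonsingular M-matrix, and
\[
x_{\mathcal{F}} = H v,\qquad H:=-L_{\mathcal{FF}}^{-1}L_{\mathcal{FB}}.
\]

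\textbf{Step 2: $H$ is row-stochastic.} Since $L_{\mathcal{FF}}^{-1}\ge 0$ entrywise (M-matrix property) and $-L_{\mathcal{FB}}\ge 0$, we get $H\ge 0$. Since $L\mathbf{1}=0$, we have $L_{\mathcal{FF}}\mathbf{1}+L_{\mathcal{FB}}\mathbf{1}=0$, hence $H\mathbf{1}=\mathbf{1}$. Equivalently, $H_{oi}$ is the probability that the random walk weighted by $\kappa$, started at $o$, is first absorbed at input $i$; this gives the maximum principle already used in Theorem~\ref{Lemma:converge}. The map $A_C(\kappa)$ consists of the rows of $H$ indexed by output nodes. (Output nodes lie in $\mathcal{F}$, since inputs and outputs are disjoint.) Hence each row of $A_C(\kappa)$ is a probability vector.

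\textbf{Step 3: Realizing the map with a crossbar.} In a crossbar each output node $o$ is adjacent only to input nodes, including ground. Kirchhoff's law at $o$ then gives
\[
x_o=\frac{\sum_i \kappa'_{io}v_i}{\sum_i\kappa'_{io}}.
\]
So I would set $\kappa'_{io}:=H_{oi}$, or any positive multiple of row $o$ of $H$. Because the row sums equal $1$, the denominator is $1$ and $A_{C'}(\kappa')=A_C(\kappa)$. Each output node has at least one positive edge, so the crossbar itself satisfies the uniqueness assumption.

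\textbf{Main obstacle.} The delicate point is Step 1/2: establishing invertibility and entrywise nonnegativity of $L_{\mathcal{FF}}^{-1}$ when the circuit may contain zero conductances or components not containing an output. I would handle this component by component, using the uniqueness assumption to guarantee that each component touches an input. I would then prove nonnegativity either by the M-matrix argument (a Neumann series for $D^{-1}$ times the off-diagonal part, with spectral radius $<1$ by irreducible diagonal dominance) or by the absorption-probability interpretation.
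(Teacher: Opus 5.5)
Your proposal is correct and follows the same route as the paper. Both show that $A_C(\kappa)$ is right-stochastic, with row sums equal to one because constants solve the input-constrained problem and nonnegative entries by the discrete maximum principle, and both then realize that matrix by a crossbar whose conductances into each output are proportional to the corresponding row. Your Kron-reduction/M-matrix argument simply makes rigorous the maximum-principle step the paper cites, with one small fix: check irreducible diagonal dominance on each connected component of the subgraph induced on the free nodes (each of which must border an input node by uniqueness), not on components of the whole circuit.
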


\begin{proof}
    Let $\kappa'_{ij}$ be the conductance of the resistor connecting input node $j$ to output node $i$. There is one such resistor for each input-output pair by assumption, and no others. Then the entries of $A_{C'}(\kappa')$ are given by
    \[
    [A_{C'}(\kappa')]_{ij} =  \frac{\kappa'_{ij}} {\sum_k \kappa'_{ik}}
    \]
  Crossbars fully parameterize the set of $I \times O$ right stochastic matrices, defined by
    \[
    \mathcal{S} := \{A:\mathbb{R}^I\to \mathbb{R}^O|[A]_{ij} \geq 0, A\mathbf{1}_I = \mathbf{1}_O\},
    \]
    where $\mathbf{1}_K\in \mathbb{R}^{K}$ is the vector of $K$ ones. That is, for any $A \in \mathcal{S}$, there exists $\kappa'_{ij}\geq0$ such that $A = A_{C'}(\kappa')$. 

     Now, showing that $A_C(\kappa) \in \mathcal{S}$ completes the proof. First, $A_C(\kappa)\mathbf{1}_I =\mathbf{1}_O$, because for the input constraint $P^\top x_0 = \mathbf{1}_I$, the configuration $x_0 = \mathbf{1}_N$ is the global minimum ($G(x_0;\kappa) = 0$). Second, each entry $a_{ij} = e_i^\top A_C(\kappa) e_j \geq 0$ by the maximum principle, since acting on the right with $e_j$ is represents imposing a voltage constraint which is zero on all input nodes except node $j$, where it the 
    voltage is one.
    
\end{proof}

We now consider the learning dynamics of a crossbar array. Since the outputs are connected only to inputs with fixed voltages, each output can be treated independently. The problem therefore reduces to a single output node connected to $N$ inputs by conductances $\kappa_1, \ldots, \kappa_N > 0$.  Without loss of generality we order the input voltages so that $x_1 \geq x_2 \geq \cdots \geq x_N$ with $x_1 \neq x_N$.
 
Let
\[
\bar{\kappa} = \sum_{i=1}^{N} \kappa_i
\]
and define the output voltage in the free state as
\[
\mu = \bar{\kappa}^{-1} \sum_{i=1}^{N} \kappa_i\, x_i.
\]
The energy of the crossbar is
$G(x_{\mathrm{out}}; \kappa) = \frac{1}{2}\sum_{i} \kappa_i(x_{\mathrm{out}} - x_i)^2$,
and one checks that $\mu$ is its minimizer.
Given a target $w \in (x_N, x_1)$, the error and loss are
\[
r = \mu - w, \qquad \Phi = \dfrac{1}{2}\,r^2.
\]
\begin{theorem}\label{thm:crossbar}                   
  In the setting described above, for both CL and EP, the loss                 
  \[\Phi(t) = \dfrac{1}{2}\big(\mu(t) - w\big)^2 \]
  converges to zero exponentially. This holds even if some conductances reach
  zero and are removed during training.
  \end{theorem}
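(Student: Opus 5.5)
The plan is to reduce everything to an explicit scalar ODE for $r=\mu-w$. Then I would use conservation (Theorems~\ref{Thm:Cond} and~\ref{Thm:Res}, together with the remark that removing a zero-conductance edge preserves $K$) to bound the rate coefficient from below, uniformly in time.

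\textbf{Step 1: explicit dynamics.} For the star graph (one free node, the output, joined to clamped inputs), the free voltage drop across edge $i$ is $d_i^\top R v=\pm(\mu-x_i)$. Grounding the inputs and injecting current $r$ at the output gives output voltage $r/\bar\kappa$, so $Q^\top RQ=\bar\kappa^{-1}$. Hence the EP clamped drop is $r/\bar\kappa$, and the CL drop is $u_r/\bar\kappa = r$.

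Substituting into \eqref{eq:EP-cirucit} and \eqref{eq:CL-circuit}, and fixing the sign by comparison with $\partial_{\kappa_i}\Phi = r(x_i-\mu)/\bar\kappa$ (EP is gradient flow by \eqref{eq:dynamics-EP}), I expect
\[
\dot\kappa_i = -\frac{r\,(x_i-\mu)}{\bar\kappa}\ \text{(EP)},\qquad \dot\kappa_i=-r\,(x_i-\mu)\ \text{(CL)}.
\]
Next, differentiate $\mu$, using $\partial_{\kappa_i}\mu=(x_i-\mu)/\bar\kappa$. This gives
\[
\dot r=\dot\mu=-r\,\frac{S}{\bar\kappa^{\,c}},\qquad S:=\sum_{i\ \text{alive}}(x_i-\mu)^2,
\]
with $c=2$ for EP and $c=1$ for CL. Therefore $\dot\Phi=-2\Phi\,S/\bar\kappa^{\,c}$. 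In particular $r$ never changes sign and $|r|$ is nonincreasing. The sum runs only over edges that are still present, and removal of an edge does not alter this formula.

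\textbf{Step 2: upper bound on $\bar\kappa$.} By conservation, $\|\kappa\|_2=C$ for all time, including after removals. Cauchy--Schwarz then gives $\bar\kappa\le\sqrt N\,C$.

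\textbf{Step 3: lower bound on $S$ (main obstacle).} The difficulty is that edges may die, so I cannot simply use $(x_1-\mu)^2+(x_N-\mu)^2$. I would argue by the sign of $r$; say $r(0)>0$, with the case $r(0)<0$ symmetric (use edge $1$) and $r(0)=0$ trivial.

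Since $r>0$ for all time, $\mu>w>x_N$. Then $\dot\kappa_N=-r(x_N-\mu)/\bar\kappa^{c-1}>0$, so $\kappa_N(t)\ge\kappa_N(0)>0$ and edge $N$ never dies. Now use the balance $\sum_{\text{alive}}\kappa_i(x_i-\mu)=0$. It gives
\[
\sum_{x_i>\mu}\kappa_i(x_i-\mu)\ \ge\ \kappa_N(\mu-x_N)\ \ge\ \kappa_N(0)(w-x_N).
\]
Bounding $\kappa_i\le C$ on the left yields $\sum_{x_i>\mu}(x_i-\mu)\ge \kappa_N(0)(w-x_N)/C$. A second application of Cauchy--Schwarz then gives
\[
S\ \ge\ \frac{\kappa_N(0)^2(w-x_N)^2}{N C^2}=:s_0>0.
\]

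\textbf{Step 4: conclusion.} Combining the steps, $\dot\Phi\le-2\,\bigl(s_0/(\sqrt N C)^c\bigr)\,\Phi$, and Gr\"onwall gives exponential decay of $\Phi$. The constants depend only on the initial data, $N$, and $w$. The argument used only conservation and the sign structure, so it holds even if some conductances reach zero and are removed. Note that the protected edge (edge $N$, or edge $1$ when $r<0$) provably never dies.
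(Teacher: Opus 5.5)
Your proposal is correct and follows the paper's proof essentially step for step: you derive the explicit scalar ODE $\dot\Phi=-2\Phi\,S/\bar\kappa^{\,c}$, bound $\bar\kappa\le\sqrt{N}\,C$ via conservation, use a protected edge on the far side of $w$ that only grows and so is never removed, and finish with Gr\"onwall. The only difference is that your balance-plus-Cauchy--Schwarz detour in Step 3 is unnecessary, because once edge $N$ is known to survive you get $S\ge(\mu-x_N)^2\ge(w-x_N)^2$ directly (the paper does exactly this: it treats $r<0$ with edge $1$ and states that edge $N$ handles $r>0$ symmetrically), and this also gives a constant free of $\kappa_N(0)$, $N$ and $C$.
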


  \begin{proof}

  We give the proof for CL; the EP case is addressed in Remark~\ref{Rmk:EP_Crossbar} below. Computing the CL update gives
\[
\dot{\kappa}_i
= \lim_{\eta \to 0}
  \dfrac{1}{\eta}\Big(\partial_{\kappa_i}G\big(\mu + \eta r;\, \kappa\big)
  - \partial_{\kappa_i}G\big(\mu;\, \kappa\big)\Big)
= -r\,(x_i - \mu).
\]

Write $\mathcal{A}(t) := \{i : \kappa_i(t) > 0\}$ for the set of
\emph{active} edges.  Since
$\partial_{\kappa_i}\mu = \bar{\kappa}^{-1}(x_i - \mu)$, the loss dynamics
restricted to active edges are
\[
\dot{\mu}
= \bar{\kappa}^{-1}\sum_{i \in \mathcal{A}} \dot{\kappa}_i\,(x_i - \mu)
= -\dfrac{r}{\bar{\kappa}} \sum_{i \in \mathcal{A}} (x_i - \mu)^2.
\]
Writing $V_{\mathcal{A}} := \sum_{i \in \mathcal{A}} (x_i - \mu)^2$, we obtain
\begin{equation}\label{eq:crossbar_loss}
\dot{\Phi} = r\,\dot{\mu} = -\dfrac{2\Phi}{\bar{\kappa}}\, V_{\mathcal{A}}
\leq 0.
\end{equation}
We need to show that $V_{\mathcal{A}}$ remains bounded below.

Whenever an edge's conductance reaches zero, we remove it from the circuit.
Since $\kappa_i = 0$ at the moment of removal, neither $\mu$ nor $\bar{\kappa}$
changes discontinuously, and the removed edge contributes nothing to the
conserved quantity $K = \frac{1}{2}\sum \kappa_i^2$.
The ODE continues on the reduced system.

If $r = 0$ then $\Phi = 0$ and the target is already achieved.
Otherwise, since $\dot{\Phi} \leq 0$, the error $r$ cannot change sign.
We treat the case $r < 0$ (i.e.\ $\mu(t) < w$ for all $t$);
the case $r > 0$ is symmetric. Since $r < 0$, the dynamics $\dot{\kappa}_i = -r\,(x_i - \mu)$ give:
\begin{itemize}
\item $x_i > \mu(t)$:\; $\dot{\kappa}_i > 0$ (growing),
\item $x_i < \mu(t)$:\; $\dot{\kappa}_i < 0$ (shrinking, may hit $0$).
\end{itemize}

Since $\mu(t) < w < x_1$, every edge with $x_i \geq w$ satisfies
$x_i > \mu(t)$, so $\dot{\kappa}_i > 0$.
These edges are growing and never reach zero. Moreover, at least one active edge below $w$ exists, or else $\mu \geq w$, contradicting $r < 0$.
So at least one active edge has $x_i < w$, and it is clearly distinct
from $x_1$.

Among active edges, edge~$1$ (with input $x_1$) satisfies $x_1 > \mu$, so
\[
V_{\mathcal{A}} \geq (x_1 - \mu)^2 \geq (x_1 - w)^2 > 0,
\]
since $\mu < w < x_1$.  This bound is uniform in time.
In the case $r > 0$, the symmetric argument shows that $x_N$ always
survives and gives $V_{\mathcal{A}} \geq (x_N - w)^2 > 0$.  In either case,
\[
V_{\min} := \min\!\big((x_1 - w)^2,\, (x_N - w)^2\big) > 0.
\]

By Theorem \ref{Thm:Cond} 
$\|\kappa\|^2 = \sum \kappa_i^2 = 2K$ is conserved.
Since all $\kappa_i \geq 0$,
\[
\bar{\kappa}^2 = \Big(\sum_i \kappa_i\Big)^2
\geq \sum_i \kappa_i^2 = 2K,
\]
and by Cauchy--Schwarz,
\[
\bar{\kappa} \leq \sqrt{N}\,\|\kappa\| = \sqrt{2NK},
\]
so $\bar{\kappa}$ remains bounded above and below for all time.

Combining \eqref{eq:crossbar_loss} with the bounds on $\bar{\kappa}$ and
$V_{\mathcal{A}}$:
\begin{equation}
\label{eq:Phi_dot_crossbar}
\dot{\Phi}
= -\dfrac{2\Phi}{\bar{\kappa}}\, V_{\mathcal{A}}
\leq -\dfrac{2\,V_{\min}}{\bar{\kappa}}\,\Phi
\leq -\dfrac{2\,V_{\min}}{\sqrt{2NK}}\,\Phi
=: -\lambda\,\Phi.
\end{equation}
By Gronwall's inequality,
\[
\Phi(t) \leq \Phi(0)\, e^{-\lambda t}.
\]
The loss converges to zero exponentially, and hence $\mu(t) \to w$.
\end{proof}

\begin{remark}
The EP dynamics for the crossbar are
$\dot{\kappa}_i = -r\,\bar{\kappa}^{-1}(x_i - \mu)$,
which differ from the CL dynamics only by the factor $\bar{\kappa}^{-1}$.
The loss dynamics become
$\dot{\Phi} = -2\Phi\,\bar{\kappa}^{-2}\,V_{\mathcal{A}}$,
and the same argument gives exponential convergence with rate
$\lambda_{\mathrm{EP}} = 2\,V_{\min}/(2NK) = V_{\min}/(NK)$.
\label{Rmk:EP_Crossbar}
\end{remark}

\subsection{Practical implications for training circuits}
\label{sec:practical_implications}

The conservation law reveals that for certain choices of clamping, the overall scale of parameters in the network stays fixed. As we demonstrate here, this has practical importance, since if parameters all grow or shrink indefinitely, the boundaries of allowable values can be hit before a solution is achieved by the circuit. In particular, we examine CL training dynamics for several circuit designs with different nudging choices. We see that, generally speaking, non-conserving nudging can lead parameter trajectories to never converge, or converge only on boundaries of the space where the task may be failed.

\paragraph{Conductance mass drift in linear circuits}

As a general point, training with $\eta \neq 0$ in linear circuits causes the total conductance mass to shift up or down. Recall that this mass is given by
\[
K = \frac{1}{2}\sum_i \kappa_i^2\,.
\]

One can compute the exact pre-$(\eta \to 0)$ learning rule for a linear circuit, by computing \eqref{Learning_dynamics} directly:
\[
\kappa_{i}(t+\alpha) =\kappa_{i}(t) +  \alpha (d_i^\top x_0) (d_i^\top y) - \frac{\alpha \eta}{2}(d_i^\top y)^2\,,
\]
Here $x_0$ and $y$ are the free and clamped states described in \eqref{eq:EP-cirucit} for EP and \eqref{eq:CL-circuit} for CL. Using this update, we can now directly calculate the rate of change of conductance mass,
\begin{equation}\label{eq:mass_drift_discrete}
\begin{aligned}
K(t+\alpha) - K(t) &= \alpha \langle y, \nabla_xG(x;\kappa) \rangle  
\\
&- \alpha\eta G(y;\kappa)+ \sum_i\left(\alpha^2(d_i^\top x)^2(d_i^\top y)^2 +\dfrac{\alpha^2\eta^2}{2}(d_i^\top y)^4\right)
\\
&=- \alpha\eta G(y;\kappa)+ \alpha^2\sum_i\left((d_i^\top x)^2(d_i^\top y)^2 - \eta (d_i^\top x)(d_i^\top y)^3+\dfrac{\eta^2}{2}(d_i^\top y )^4\right)
\end{aligned}
\end{equation}
where we have used the conservation law (Theorem \ref{Thm:Cond}): $\langle y, \nabla_xG(x;\kappa) \rangle =0$.  

As $\alpha \to 0$, we see that 
\begin{equation}\label{eq:mass_drift}
\dot{K} = - \eta G(y;k).
\end{equation}

Recall that $G(x;\kappa) \geq 0$ for all $x$. Hence for $\eta > 0$, $\dot{K} \leq 0$. For a connected circuit, the equality $G(y) = 0$ only holds in the trivial configuration where the voltage $y\equiv 0$ i.e. all nodes have voltage zero in the clamped state, which only occurs when the constraint is satisfied. In practice, we should then see that $K$ decreases (increases) when $\eta > 0$ ($\eta < 0$) until a solution or boundary is reached. In the latter case, the learning rule needs to be modified in an \textit{ad hoc} way, in which case, \eqref{eq:mass_drift} would no longer hold. 

The expression, \eqref{eq:mass_drift},  also provides another perspective on conservation of $K$ when\textit{ symmetric} clamping \eqref{eq:symmetric-clamping} is used. Symmetric clamping compares a positively nudged state with a negatively nudged state instead of comparing the free state to the nudged state. Specifically, $x_0$ is formally replaced by $x_{-\eta}$ in \eqref{eq:continuous-dynamics}.
In this case, the upwards drift on conductances due to negative clamping is offset by the downward drift due to positive clamping. This is demonstrated below in the case of training a voltage divider.

First, we introduce the idea of a batched update, which can be used to train a network to satisfy a set of multiple constraints simultaneously. This technique is commonly used in practical settings, where constraints represent points from some distribution or dataset which is to be learned. 

Consider a number of constraints given by input-output pairs $D = \{(v^{(n)},w^{(n)})\}_{n=1}^B$, and define the expectation empirically:

\[
\langle f(v, w) \rangle = \frac{1}{B}\sum_{n=1}^B f(v^{(n)},w^{(n)})
\]

For any given pair $(v,w)$ of input constraint and desired output, and for general $\eta$, the conductance dynamics are given by

\[
\dot{\kappa}_{i}^{(1)}(v, w) :=  d_i^\top x_0(v) d_i^\top y(v,w) - \frac{\eta}{2}(d_i^\top y(v,w))^2.
\]
To train on the entire constraint set, we take the average over these updates to obtain the learning rule for a batch:

\begin{equation}\label{eq:batched_update}
    \dot{\kappa}_i = \langle\dot{\kappa}_{i}^{(1)}(v, w) \rangle.    
\end{equation}
Note that, if $K$ is conserved by $\dot{\kappa}_{i}^{(1)}(v^{(n)},w^{(n)})$ for all $n$, then $\dot{\kappa}_i$ also conserves $K$. 

\paragraph{Voltage divider}

First we consider a simple voltage divider trained using CL. The voltage divider consists only of three nodes connected in a line: input to output to ground. It is depicted in Figure \ref{fig:voltage_divider}. We can express $x^{(n)} := x_0(v^{(n)})$ and $y^{(n)} := x^{(n)} - (v^{(n)},w^{(n)})$, where we have expressed explicitly the functional dependencies of the free and clamped states on $v$ and $w$. Then
\[
Q^\top x^{(n)} = a(\kappa_1, \kappa_2) v^{(n)} = \frac{\kappa_1}{\kappa_1 + \kappa_2} v^{(n)}.
\]

\begin{figure}
\centering
\begin{circuitikz}
  \draw
  (0,0) node[fill=red, circle, inner sep=2pt]{} node[below]{$v$} to[R=$\kappa_1$] (3,0)
  node[fill=blue, circle, inner sep=2pt]{} node[below]{$w$} to[R=$\kappa_2$] (6,0)
  node[ground]{};
\end{circuitikz}
\caption{A simple voltage divider with input $v$, output $w$, and ground}
\label{fig:voltage_divider}
\end{figure}

To write down the dynamics, we only need the second moments. Rescaling $(v, w) \mapsto (\alpha v, \alpha w)$ multiplies the right-hand side of the dynamics by $\alpha^2$, which amounts to a reparameterization of time. Therefore, we set $\langle v^2 \rangle = 1$ without loss of generality and define $c := \langle vw \rangle / s$, so that  

 \begin{equation}\langle v^2 \rangle = 1,\quad  \langle w^2 \rangle = s^2, \quad \langle vw \rangle = sc.
 \label{Moments}
 \end{equation}
Note that $|c| < 1$ by Cauchy--Schwarz. The $\eta = 0$ case gives
\begin{align*}
    \dot{\kappa}_1 &= (a(\kappa)-1)(a(\kappa) - sc) \\
    \dot{\kappa}_2 &= a(\kappa) (a(\kappa) - sc).
\end{align*}
For linear circuits, symmetric clamping gives the exact same dynamics as the $\eta = 0$ case, up to a rescaling of time. Thus, a fortiori, the same is true for a voltage divider. 

At the other extreme, the dynamics for the $\eta = 1$ case are given by
\begin{align*}
    \dot{\kappa}_1 &= \frac{1}{2}(a(\kappa)^2 - s^2) + sc - a(\kappa)\\
    \dot{\kappa}_2 &= \frac{1}{2}(a(\kappa)^2 - s^2).
\end{align*}

In the $\eta = 0$ case, we see that $a(\kappa)=sc$ is the only fixed point of the learning dynamics. Moreover, it is the ``correct'' solution in light of \eqref{Moments}.

At the other extreme, when $\eta=1$, the only fixed points occur on the boundary. In fact, for $\eta \neq 0$, there are no fixed points in the interior. Because the vector field $\kappa^{(1)}_i$ is linear in $\eta$, $\dot{\kappa}$ for any $\eta$ can be obtained as a suitable linear combination of $\dot{\kappa}$ at $\eta=0$ and $\eta=1$. In particular, as noted earlier the flow generated by symmetric clamping as in \eqref{eq:symmetric-clamping} is equivalent to the one obtained with $\eta=0$.

\begin{figure}
    \centering
    \includegraphics[width=1\linewidth]{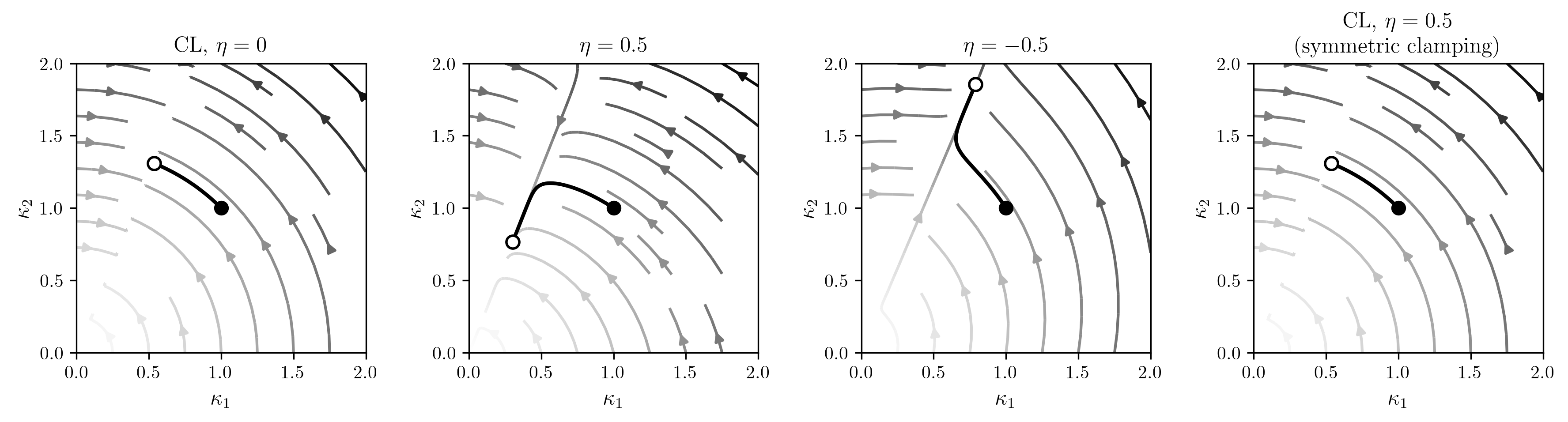}
    \caption{Voltage divider conductance dynamics under Coupled Learning with batched updates and different clamping settings. Vector fields are calculated as a linear combination of the $\eta=0$ and $\eta=1$ cases. Black lines are trajectories computed by simulating the voltage divider circuit. The constraint set is 10 points $(v_n,w_n)$ drawn i.i.d. from a normal distribution with $s \approx 1/3$ and $c \approx 0.9$. Initial and final configurations are given by black and white dots, respectively. The simulation is run for a set amount of time, regardless of whether convergence is achieved.}
    \label{fig:voltage_divider_flows}
\end{figure}

We can understand the lack of convergence in terms of \eqref{eq:mass_drift}. The batched update gives:

\[
\dot{K} = -\eta \langle G(y(v, w);\kappa) \rangle
\]

In the case that there exist two or more constraints which cannot be satisfied (i.e. there does not exist $a \in (0,1)$ s.t. $w_n = a v_n\,\forall n$) we must have that $G(y(v_n, w_n;k)) >0$ for some $n$, and hence $\text{sign} (\dot{K}) = -\text{sign}(\eta)$ while $\kappa_i$ have not hit the boundaries of the space.

While in the $\eta\neq 0$ asymmetric clamping cases the conductances are fated to either hit the origin or grow unbounded, this does not mean an effective solution is not found by the circuit. Notice in Fig \ref{fig:voltage_divider_flows} that for $\eta=0.5$ and $\eta=-0.5$, the trajectory is drawn towards a straight line emanating from the origin. Along this line, the ratio $a(\kappa_1, \kappa_2)$ is constant. Hence, if one stops training before the a parameter boundary is hit, the input-output behavior of the circuit may have nearly converged, even though the conductances are still moving.



\paragraph{Crossbar array}

\begin{figure}
    \centering
    \includegraphics[width=\linewidth]{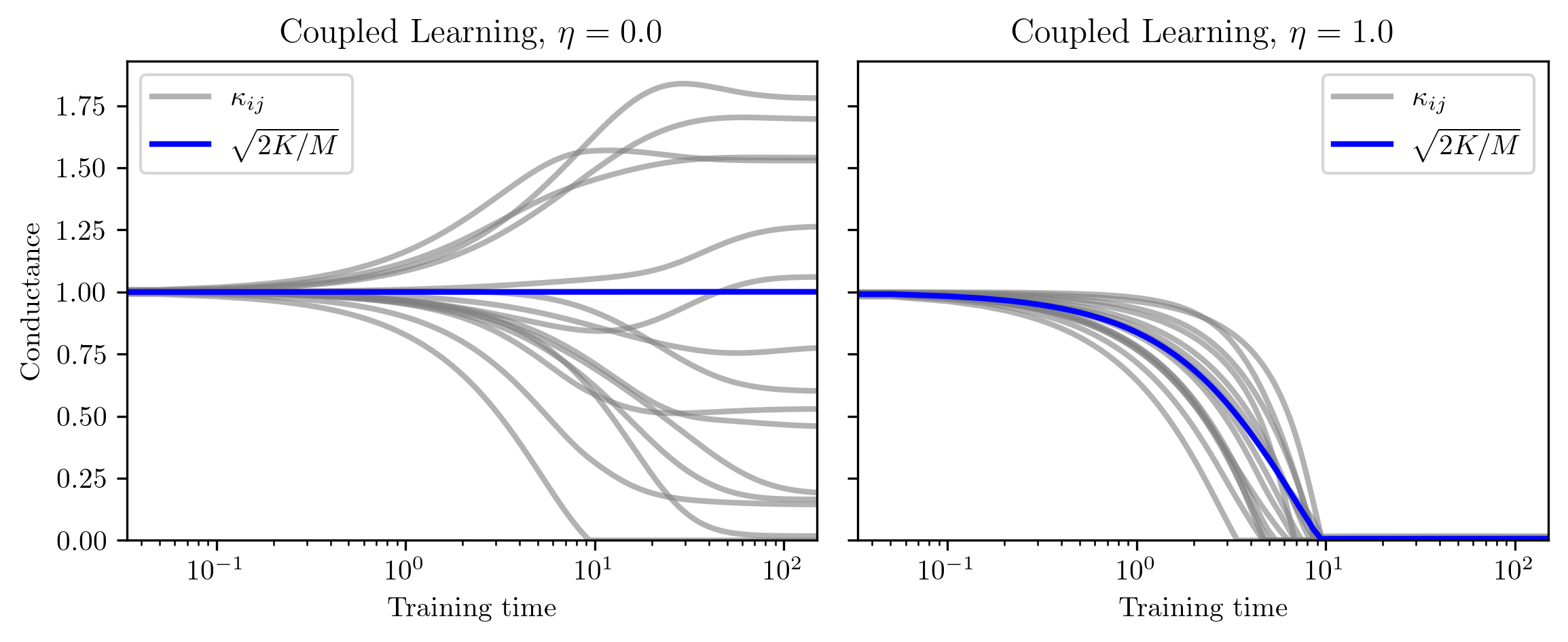}
    \caption{Coupled Learning dynamics of a crossbar array with 5 inputs and 3 outputs. 50 constraint points $(v_n, w_n)$ were sampled iid from a normal distribution and cannot all be satisfied by the circuit. Grey lines show trajectories of individual conductances, while the blue line in each plot shows the RMS value, which is simply related to the total mass conductance mass $K$.}
    \label{fig:crossbar_flows}
\end{figure}

While the crossbar array has the capacity to achieve the input-output map of any other linear circuit, this is not a guarantee that training dynamics will lead it to a solution. In Fig \ref{fig:crossbar_flows}, we see that, as with the voltage divider, using a non-conserving positive nudge causes the conductances shrink over time. Eventually, this causes all of the conductances to hit zero which clearly constitutes a failure of the task. By contrast, using $\eta=0$ (or equivalently, symmetric clamping with any $\eta$) keeps the overall parameter scale the same. Note that this does not guarantee that all conductances remain positive: one edge actually does go to zero at around $t=10$.


\paragraph{Random circuit}

In the voltage divider, learning incommensurate constraints with batched updates at $\eta\neq0$ guarantees that the asymptotic behaviors are $K\to 0$ ($\eta >0$) or $K\to \infty$ ($\eta <0$). Larger circuits with more complicated topologies present a subtler story. Consider a connected linear circuit with hidden nodes (for example, with an ER graph topology) and, without loss of generality, no input-input connections. For a setting with multiple inputs and outputs, we define a set of incommensurate constraints $\{(v_n, w_n)\}$ as one which satisfies:

$$
\min_A \sum_n||w_n - Av_n||^2 > 0
$$

For any circuit, this condition implies that $y(v_n, w_n) \neq 0$ for some $n$. Considering that $y$ is always zero on input nodes, we realize that if any input is connected to any output at which $y(v_n, w_n) \neq 0$ for some $n$, then $\langle G \rangle>0$. Without presenting an exhaustive characterization of the conditions which can lead to this, we simply note that a sufficient condition for $\langle G \rangle=0$ is that all paths from outputs to inputs are removed from the circuit. Hence, for a generic linear circuit, widespread disconnections to outputs can also lead to $\dot{K}=0$, providing an asymptotic behavior which is different from the voltage divider.

\begin{figure}[h]
    \centering
    \includegraphics[width=\linewidth]{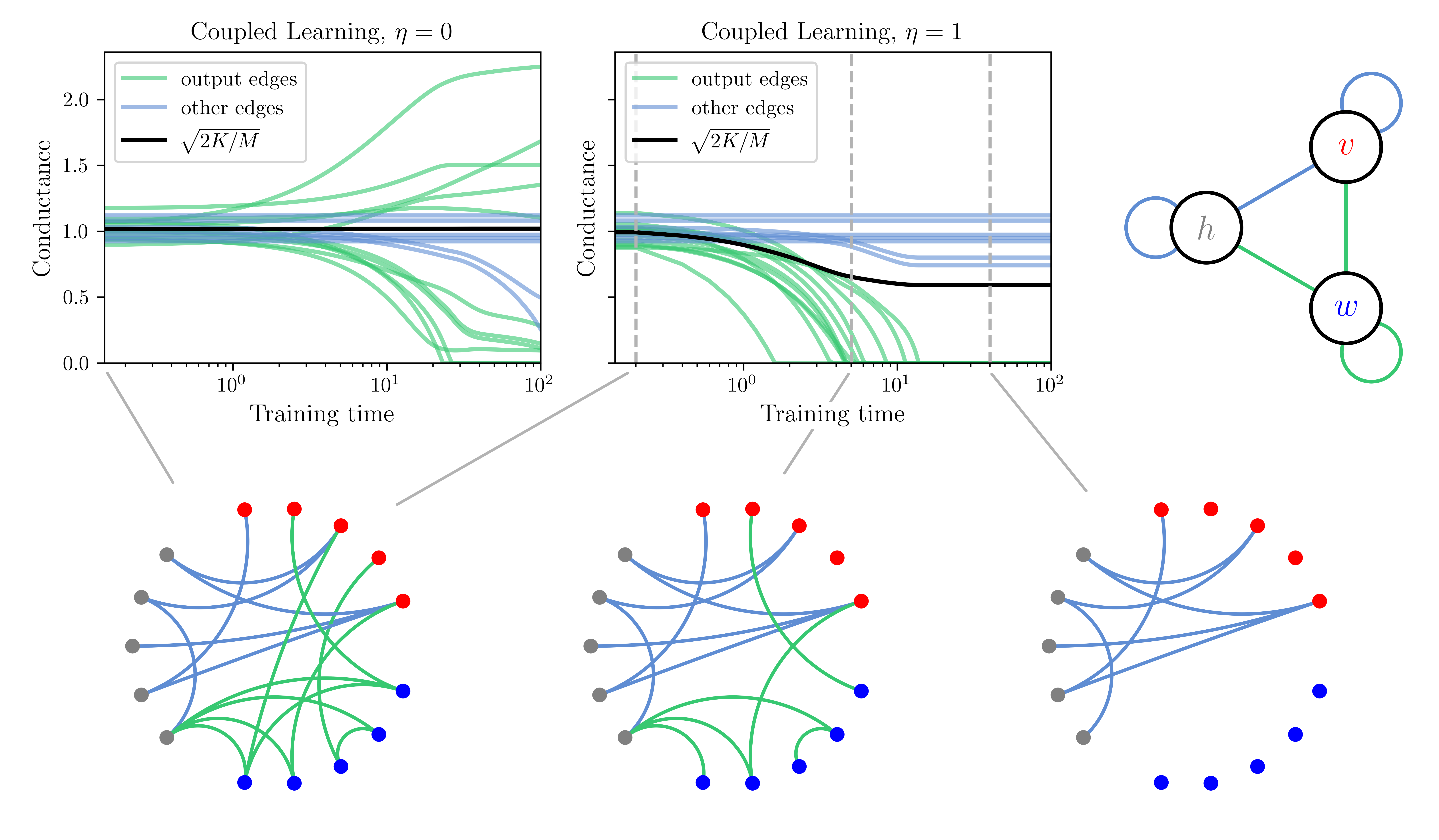}
    \caption{Coupled learning dynamics of a linear circuit with randomly chosen topology. Nodes are colored as inputs: red, outputs: blue, hiddens: grey. Edges are colored by whether they attach to an output. Dataset consists of 40 points $(v_n, w_n)$ drawn iid from a normal distribution with covariance eigenvalues $\lambda_n=n^{-1}$ and random eigenvectors (sampled from uniform measure on orthogonal matrices). In trajectory plots, the black line gives the RMS of conductances in terms of total conductance mass $K$. The networks shown below the trajectories only depict whether an edge is present or absent, not its conductance.}
    \label{fig:er_flows}
\end{figure}

In Fig~\ref{fig:er_flows}, we see this disconnecting asymptotic behavior in an explicit example. Recall that once an edge conductance vanishes, that edge is removed from the circuits. We generated a generic incommensurate constraint set by sampling 40 points from a Gaussian with full-rank covariance, then trained using coupled learning at $\eta=0$ and $\eta=1$. In the $\eta=1$ case, the conductance mass (black line) initially decreases due to $\langle G \rangle > 0$. Then, following a series of disconnection events, paths from inputs to outputs are gradually removed until none remain. In our $\eta=1$ example, all of the edges (green) which disconnect are initially connected to an output node (blue), and all of the other edges (blue) remain after training. This does not need to be the case. In some cases, we see some output edges remain while other edges disappear. Regardless, the asymptotic behavior of this circuit with these constraints appears to be characterized by the removal of edges until no paths from inputs to outputs remain. 

 \section{Discussion}                                                       
  We have shown that physical learning algorithms in the small-nudge regime   conserve a mass-like quantity $K = \tfrac{1}{2}\sum_i f_i(k_i)$ whenever the energy separates as $G(x;k) = \sum_i g_i(k_i)\widetilde{G}_i(x)$ (Lemma~\ref{Lemma:energy_conservation}). For linear circuits this specializes to the
  conserved squared-norm $\tfrac{1}{2}\sum_e \kappa_e^2$ (or $\tfrac{1}{2}\sum_e \rho_e^2$ when training resistances), and we have given three independent proofs of the conservation law: a direct calculation for CL, a gradient-flow argument for EP, and a unified KKT-based proof that handles   both. In linear circuits, symmetric clamping reproduces the $\eta \to 0$   dynamics at any finite $\eta$, so conservation is achievable in hardware without taking small perturbations. We used this conservation law to prove exponential convergence of CL and EP on crossbar arrays (Theorem~\ref{thm:crossbar}) and convergence-or-edge-death in the single-input/single-output setting (Theorem~\ref{Lemma:converge}). We also saw that non-conserving nudging causes the overall scale of the parameters to drift, which can cause training to fail by hitting parameter boundaries before a solution is reached.                                                                                        
  Several questions remain open. First, Remark~\ref{rmk:overdetermined} notes that  allowing $K \colon \mathbb{R}^M \to \mathbb{R}$ to be a general function rather than a sum yields an overdetermined PDE characterizing conserved quantities. We do not know the broadest class of circuits and energies for which nontrivial solutions exist, though preliminary calculations suggest that specific topologies (e.g.\ series-parallel networks) admit higher-order
  polynomial conservations beyond the squared norm. Second,                  Theorem~\ref{Lemma:converge} leaves open the possibility that conductances vanish in such a way that the input becomes disconnected from the output;  Remark~\ref{rmk:edge_death} rules this out in the single-bridge case, but the general topology is harder, since multiple edges can vanish simultaneously and the voltage structure can degenerate in more complicated ways. Third, the   reflecting rule of Section~\ref{Cons of C} requires $\dot{\kappa}_e$ to                    be defined for edges with $\kappa_e = 0$; we cannot at present rule out      pathological circuits where this rule is itself ill-defined. Finally, our    analysis of practical implications was carried out for linear circuits, where symmetric clamping conserves $K$ exactly. For nonlinear circuits symmetric clamping conserves $K$ only up to $O(\eta^2)$, and whether this is enough to prevent the drift pathologies we observed in the linear case is a question for future work. 

\section*{Funding}

YM and AGK were partially supported by NSF through the University of Pennsylvania Materials Research Science and Engineering Center (MRSEC) (DMR-2309043).

\section*{Declaration of competing interest}

The authors declare that they have no known competing financial interests or personal relationships that could have appeared to influence the work reported in this paper.

\section*{Data availability}

Code used to generate the figures in this paper is available from the authors on reasonable request.

\section*{Declaration of generative AI in the writing process}

During the preparation of this work, the authors used Claude (Anthropic) for copy-editing assistance. After using this tool, the authors reviewed and edited the content as needed and take full responsibility for the content of the publication.

\printbibliography

\end{document}